\newtheorem{theorem}{Theorem}[section]
\newtheorem{lemma}[theorem]{Lemma}
\newtheorem{proposition}[theorem]{Proposition}
\theoremstyle{definition}
\newtheorem{remark}[theorem]{Remark}
\newtheorem{remarks}[theorem]{Remarks}
\numberwithin{equation}{section}
\DeclareMathOperator{\g}{\mathsf{g}}
\DeclareMathOperator{\ord}{ord}
\DeclareMathOperator{\supp}{supp}
\newcommand{\Z}{\mathcal{Z}}
\begin{document}
\title[On the Harborth constant of $C_3 \oplus C_{3n}$]{On the Harborth constant of $C_3 \oplus C_{3n}$}
\author{P. Guillot \and L. E. Marchan \and  O. Ordaz  \and W. A. Schmid \and H. Zerdoum}

\address{(P. G. \& W. A. S. \& H. Z.) Universit\'e Paris 13, Sorbonne Paris Cit\'e, LAGA, CNRS, UMR 7539, Universit\'e Paris 8, 
F-93430, Villetaneuse, France, and 
Laboratoire Analyse, G\'eom\'etrie et Applications (LAGA, UMR 7539), COMUE Universit\'e Paris Lumi\`eres, Universit\'e Paris 8, 
CNRS, 93526 Saint-Denis cedex, France}
\email{\texttt{philippe.guillot@univ-paris8.fr}} 
\email{\texttt{schmid@math.univ-paris13.fr}}
\email{\texttt{hanane\_zerdoum@yahoo.fr}}

\address{(L. E. M.) Escuela Superior Polit\'ecnica del Litoral, ESPOL, Facultad de ciencias naturales y matem\'atica. Campus Gustavo Galindo, km 30.5, v\'ia Perimetral, P.O. Box 09-01-5863, Guayaquil, Ecuador.}
\email{\texttt{lmarchan@espol.edu.ec}}

\address{(O.O.) Escuela de Matem\'aticas y Laboratorio MoST, Centro ISYS, Facultad de Ciencias,
Universidad Central de Venezuela, Ap. 47567, Caracas 1041--A, Venezuela}
\email{\texttt{oscarordaz55@gmail.com}}

%\thanks{}

\keywords{finite abelian group, zero-sum problem, Harborth constant, squarefree sequence}
\subjclass[2010]{11B30, 20K01}

\begin{abstract}
For a finite abelian group $(G,+, 0)$ the Harborth constant $\g(G)$ is the smallest integer $k$ such that each squarefree sequence over $G$ of length $k$, equivalently each subset of $G$ of cardinality at least $k$,  has a subsequence of length $\exp(G)$ whose sum is $0$. In this paper, it is established that  $\g(G)= 3n + 3$ for prime $n \neq 3$ and $\g(C_3 \oplus C_{9})= 13$.  
\end{abstract}

\maketitle

\section{Introduction}

For $(G,+,0)$ a finite abelian group, a zero-sum constant of $G$ is often defined as the smallest integer $k$ such that each set (or sequence, resp.) of elements of $G$ of cardinality (or length, resp.) at least $k$ has a subset (or subsequence, resp.) whose elements sum to $0$, the neutral element of the group, and that possibly fulfills some additional condition (typically on its size). We refer to the survey article \cite{gaogersurvey} for an overview of zero-sum constants of this and related forms. It is technically advantageous to work with squarefree sequences, that is, sequences where all terms are distinct, instead of sets. 

Harborth \cite{harborth} considered the constants that arise, for sequences and for squarefree sequences, when the additional condition on the subsequence is that its length is equal to the exponent of the group. His original motivation was a problem on lattice points. Considering these constants can be seen as an extension of the problem settled in the Theorem of Erd{\H o}s--Ginzburg--Ziv \cite{egz} from cyclic groups to general finite abelian groups.  

The constant nowadays called the Harborth constant of $G$, denoted $\g(G)$, is the constant that arises when considering squarefree sequences in the above mentioned problem. 
That is, $\g(G)$ is the smallest integer $k$ such that every squarefree sequences over $G$ of length at least $k$ has a subsequence of length $\exp(G)$ that sums to $0$. The exact value of $\g(G)$ is only known for a few types of groups. We refer to the monograph by Bajnok \cite{bajnok}, in particular Chapter F.3, for a detailed exposition. We recall some known results that are relevant for our current investigations.  

\subsection{Some known results}

For $G$ an elementary $2$-group, that is, the exponent of the group is $2$, the problem admits a direct solution:  there are no squarefree sequences of length $2$ that sum to $0$, and it follows that $\g(G)=|G|+1$ as there are no squarefree sequences of length strictly greater than the cardinality of $G$ and therefore  the condition is vacuously true for these sequences. 
For elementary $3$-groups the problem of determining $\g(G)$ is particularly popular as it is equivalent to several other well-investigated problems such as cap-sets and sets without $3$-term arithmetic progressions. Nevertheless, the exact value for elementary $3$-groups is only known up to rank $6$ (see \cite{edeletal} for a detailed overview and \cite{potechin} for the result for rank $6$); recently Ellenberg and Gijswijt \cite{ellenberg-g}, building on the work of Croot, Lev, and Pach \cite{crootetal}, obtained a major improvement  on asymptotic upper bounds for elementary $3$-groups. 

If $G$ is a cyclic group, then the problem again admits a direct solution: the only squarefree sequence of length exponent is the one containing each element of the group $G$ and it  suffices to check whether the sum of all elements of $G$ is $0$ or not. More concretely, for $n$ a strictly positive integer and $C_n$ a cyclic group of order $n$, one has:
\[
\g(C_{n})  = 
\begin{cases}  
 n & \text{if  $n$ is odd} \\
 n+1 & \text{if  $n$ is even} 
\end{cases}
\]

For groups of rank two the problem of determining $\g(G)$ is wide open. 
It is know that $\g(C_p \oplus C_p)=2p-1$ for prime $p \ge 47$ and for $p\in \{3,5,7\}$. The latter is due to Kemnitz \cite{kemnitz}, the former due to Gao and Thangadurai \cite{gaothanga}, with an additional minor improvement from the original $p \ge 67$ to $p\ge 47$ in \cite{ggs}. Furthermore, Gao and Thangadurai \cite{gaothanga} determined $\g(C_4 \oplus C_4)= 9$ and then made the following conjecture: 
\[
\g(C_{n} \oplus C_{n})=
\begin{cases}
 2n-1 & \text{if  $n$ is odd}\\
 2n+1 &  \text{if  $n$ is even}
 \end{cases}
\]

Moreover, Ramos and some of the present authors \cite{mors1} determined the value for groups of the form $C_2 \oplus C_{2n}$: 
\[
\g(C_{2} \oplus C_{2n})=
\begin{cases}
 2n+3 & \text{if  $n$ is odd}\\
 2n+2 & \text{if  $n$ is even}
\end{cases}
\]

Finally, Kiefer \cite{kiefer} (also see \cite[Proposition F.104]{bajnok}) showed that $\g(C_3 \oplus C_{3n}) \ge 3n +3$ for $n \ge 2$, 
which for $n$ odd, is larger by one then what might be expected (we refer to Section \ref{ssec_lower} for further details).  

\subsection{Main result}
In the current paper, we determine $\g(C_3 \oplus C_{3n})$ when $n$ is a prime number. It turns out that the bound by Kiefer is usually sharp, yet there is one exception, namely $n=3$. 
Specifically, we will show:
\[
\g(C_3 \oplus C_{3n}) = 
\begin{cases}
  3n + 3 & \text{if  $n\neq 3$ is prime} \\
  3n + 4 & \text{if $n=3$}
\end{cases}
\]
The proof  makes use of various addition theorems, namely the Theorems of Cauchy--Davenport, Dias da Silva--Hamidoune, and Vosper. These are applied to `projections' of the set to the subgroup $C_n$ of $C_3 \oplus C_{3n}$. This is a reason why our investigations are limited to groups where $n$ is prime. We also obtain some results by computational means. In particular, 
we confirm the conjecture by Gao and Thangadurai that we mentioned above for $C_6 \oplus C_6$.

\section{Preliminaries}
\label{sec_prel}

The notation used in this paper  follows \cite{geroldinger_barc}. We recall some key notions and results. For $a,b \in \mathbb{R}$ the interval of \emph{integers} is denoted by $[a,b]$, that is, $[a,b] = \{ z \in \mathbb{Z} \colon a \le z \le b \}$. A cyclic group of order $n$ is denoted by $C_n$. 

Let $G$ be a finite abelian group; we use additive notation. There are uniquely determined non-negative integers $r$ and  $1 < n_{1} \mid \dots \mid n_{r}$ such that $G \cong C_{n_{1}} \oplus \dots \oplus C_{n_{r}}$. The integer $r$ is called the rank of $G$. Moreover, if $|G|> 1$, then  $n_r$ is the exponent of $G$, denoted $\exp(G)$;  for a group of cardinality $1$ the exponent is $1$. 

A sequence over $G$ is an element of the free abelian monoid over $G$. Multiplicative notation is used for this monoid and its neutral element, the empty sequence, is denoted by $1$.
That is, for $S$ a sequence over $G$, for each $g \in G$  there exists a unique non-negative integer $v_{g}$ such that $S=\prod_{g \in G} g^{v_{g}}$; we call $v_{g}$ the multiplicity of $g$ in $S$. For each sequence $S$ over $G$ there exist not necessarily distinct  $g_{1}, \dots, g_{\ell}$ in  $G$ such that  $S=g_{1} \dots g_{\ell}$; these elements are determined uniquely up to ordering.  The sequence $S$ is called squarefree if $v_g \le 1$ for each $G$, equivalently, all the $g_{i}$ are distinct.

The length of $S$ is  $\ell=\sum_{g \in G}v_g$; it is denoted by $|S|$. The sum of $S$ is  $\sum_{i=1}^\ell g_{i}= \sum_{g \in G}v_gg$; it is denoted by $\sigma(S)$.  
The support of the  sequence  $S$, denoted $\supp(S)$, is the \emph{set} of elements appearing in $S$, that is, $\supp(S)=\{g \in G\colon v_{g} > 0\}$. 
A subsequence of $S$ is a sequence $T$ that divides $S$ in the monoid of sequences, that is $T=\prod_{i \in I} g_{i}$ for some $I \subset [1,\ell]$. Moreover, $T^{-1}S$ denotes the sequence fulfilling $(T^{-1}S)T=S$. 

Let $G$ and $G'$ be two groups and let $f$ be a map from $G$ to $G'$. We denote by $f$ also the homomorphic extension of $f$ to the monoid of sequences, that is, if $S=g_{1} \dots g_{\ell}$ is  a sequence over $G$, then $f(S)=f(g_{1}) \dots f(g_{\ell})$ is a sequence over  $G'$. Note that $|S| = |f(S)|$ always holds, even if the map $f$ is not injective. This highlights a difference between working with sequences and working with sets.  The image of a squarefree  sequence might not be squarefree anymore, but it always has the same length as the original sequence.
By contrast,  for $A \subset G$ a subset   $f(A) = \{f(a) \colon a \in A\}$ can have a  cardinality strictly smaller than $A$. 

If $f$ is a group homomorphism, then $\sigma(f(S))  =  f(\sigma(S))$. In particular, if $f$ is an isomorphism, then $S$ has a zero-sum subsequence of length $k$ if and only if 
$f(S)$ has a zero-sum subsequence of length $k$.  Moreover, for $g\in G$ and $S= g_1 \dots g_l$, the sequence $(g+g_1) \dots (g+ g_\ell)$ is denoted by $g+S$. Note that $S$ has a zero-sum subsequence of length $ \exp(G)$ if and only if $g+S$ has  a zero-sum subsequence of length $\exp(G)$.

The set $\Sigma (S)=\{\sigma(T)\colon 1\neq T \mid S\}$ is the set of (nonempty) subsums of $S$.  A sequence is called zero-sum free if $0 \notin \Sigma(S)$.  
Moreover, for a non-negative integer $h$, let $\Sigma_h(S)=\{ \sigma(T) \colon T \mid S \text{ with } |T| = h \}$ denote the set of $h$-term subsums. 
These notations are also used for sets with the analogous meaning. 

Using this notation the definition of the Harborth constant can be stated as follows: $\g(G)$ is the smallest integer $k$ such that for each squarefree sequence $S$ over $G$ with length $|S|\ge k$ one has $0 \in \Sigma_{\exp(G)}(S)$. 
We also need the Davenport constant $\mathsf{D}(G)$, which is defined as  the smallest integer $k$ such that for each  sequence $S$ over $G$ with length $|S|\ge k$ one has $0 \in \Sigma(S)$.

Let $A$ and $B$ be subsets of $G$. Then  $A+B$ denotes the set $\{a+b \colon a\in A, \, b \in B\}$, called the sumset of $A$ and $B$; moreover 
$A \widehat{+} B $ denotes the set $ \{ a+b \colon a\in A, \, b \in B, \, a \neq b \}$, called the restricted sumset of $A$ and $B$. 
Note that $A \widehat{+} A = \Sigma_2 (A)$.  

We recall some well-known results on set-addition in cyclic groups of prime order. 
We start with the classical Theorem of Cauchy--Davenport (see for example \cite[Theorem 6.2]{grynkiewicz_book}).

\begin{theorem}[Cauchy--Davenport]
\label{CDthm} 
Let $p$ be a prime number and let $A, B \subset C_p$ be non-empty sets, then:
\begin{equation*}
|A+B|  \ge \min \{p, |A|+|B|-1\} 
\end{equation*}
\end{theorem}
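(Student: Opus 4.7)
The plan is to prove the Cauchy--Davenport theorem by induction on $|B|$, via the classical Dyson $e$-transform. The base case $|B|=1$ is immediate, since $|A+B|=|A|\ge\min\{p,|A|\}$. For the inductive step, assume $|B|\ge 2$; if $|A+B|\ge p$ there is nothing to prove, so we may suppose $|A+B|\le p-1$, and in particular $|A|<p$. After a translation we may assume $0\in A\cap B$.

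For $e\in C_p$, set
\[
A^{(e)} = A\cup(B+e), \qquad B^{(e)} = B\cap(A-e).
\]
Inclusion--exclusion applied to $A$ and $B+e$ yields $|A^{(e)}|+|B^{(e)}|=|A|+|B|$, and a short case analysis on whether an element of $A^{(e)}$ lies in $A$ or in $B+e$ gives $A^{(e)}+B^{(e)}\subseteq A+B$. If we can choose $e$ so that $\emptyset\neq B^{(e)}\subsetneq B$, then applying the inductive hypothesis to the pair $(A^{(e)},B^{(e)})$ produces
\[
|A+B|\ge|A^{(e)}+B^{(e)}|\ge\min\{p,|A^{(e)}|+|B^{(e)}|-1\}=\min\{p,|A|+|B|-1\},
\]
completing the induction.

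The essential step is producing such an $e$. Taking any $e\in A$ places $0$ in $B^{(e)}$, since $0\in B$ and $0+e=e\in A$. On the other hand, $B^{(e)}=B$ exactly when $B+e\subseteq A$; if this held for every $e\in A$, then $A+B\subseteq A$, hence $A+B=A$, so $A$ would be stabilized by every element of $B$. Since $|B|\ge 2$ and $0\in B$, there is a nonzero $b\in B$, and primality of $p$ forces $\langle b\rangle=C_p$, so $A=C_p$, contradicting $|A|<p$. Hence a suitable $e$ exists. The main obstacle is thus this last structural argument, where primality is actually used; the rest of the proof consists of routine bookkeeping with the transform.
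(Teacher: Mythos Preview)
Your proof is correct; this is the classical Davenport--Dyson transform argument. Note, however, that the paper does not actually prove this theorem: it is quoted as a known result with a reference to \cite{grynkiewicz_book}, so there is no ``paper's proof'' to compare against. Your argument is one of the standard textbook proofs and would be entirely appropriate here.

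One small point of presentation: when you write ``after a translation we may assume $0\in A\cap B$'', it is worth making explicit that you translate $A$ and $B$ independently (replacing $A$ by $A-a_0$ and $B$ by $B-b_0$ for some $a_0\in A$, $b_0\in B$), which shifts $A+B$ but preserves $|A+B|$. Also, in the last paragraph, the implication ``$A+B\subseteq A$ hence $A+B=A$'' uses $0\in B$ to get $A\subseteq A+B$; you might state that. These are cosmetic; the logic is sound.
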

This yields immediately that  for non-empty sets $A_1, \dots, A_h \subset C_p$ one has:  
\begin{equation*}
|A_1 + \dots  +  A_h| \ge \min \bigl\{p ,  \sum_{i=1}^h |A_i| - (h-1)\bigr\}
\end{equation*}

The associated inverse problem, that is, the characterization of sets where the bound is sharp, is solved by the Theorem of Vosper (see for example \cite[Theorem 8.1]{grynkiewicz_book}). 
\begin{theorem}[Vosper]
\label{vosper}
Let $p$ be a prime number and let $A, B \subset C_p$. 
Suppose that $|A|, |B| \ge 2$ and $|A+B|= |A|+|B|-1$. 
\begin{itemize}
\item If $|A+B| \le p - 2$, then $A$ and $B$ are arithmetic progressions with common difference, that is there is some $d \in C_p$ and there are $a,b \in C_p$ such that 
	  $A = \{a+id \colon  i \in  [0,|A|-1]\}$ and $B = \{b+id \colon  i \in  [0,|B|-1]\}$.
\item If $|A+B|=p -1$, then  $A = \{c - a \colon a \in C_p \setminus B\}$ for some $c \in C_p$. 
\end{itemize}
\end{theorem}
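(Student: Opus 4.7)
The plan is to treat the two cases of the conclusion separately, since the case $|A+B|=p-1$ reduces to a counting argument while the case $|A+B|\le p-2$ requires an induction.

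For the case $|A+B|=p-1$, the equality $|A|+|B|-1=p-1$ gives $|A|=p-|B|=|C_p\setminus B|$. Picking the unique $c\in C_p\setminus(A+B)$, for every $a\in A$ the element $c-a$ cannot belong to $B$ (otherwise $c=a+(c-a)\in A+B$), so $c-A\subseteq C_p\setminus B$. Since the two sides have the same cardinality, equality holds and this gives the claimed description of $A$.

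For the case $|A+B|\le p-2$, I would proceed by induction on $|B|$, assuming $|B|\le |A|$ by the symmetry of the hypothesis and conclusion in $A$ and $B$. Translating $A$ and $B$ independently affects neither the hypothesis nor whether the sets are APs with common difference, so I may place them in convenient positions. In the base case $|B|=2$, write $B=\{0,d\}$ with $d\ne 0$; then $A+B=A\cup(A+d)$ has cardinality $|A|+1$, forcing $|A\cap(A+d)|=|A|-1$. Hence exactly one element of $A$ lacks a successor under $+d$ in $A$ and exactly one lacks a predecessor, so $A$ is an arithmetic progression with common difference $d$. In the inductive step $|B|\ge 3$ the standard device is the Dyson $e$-transform: for a suitable $e\in C_p$ put $A'=A\cup(B+e)$ and $B'=(A-e)\cap B$, so that $A'+B'\subseteq A+B$ and $|A'|+|B'|=|A|+|B|$, and hence $|A'+B'|\le |A'|+|B'|-1$ by Theorem \ref{CDthm} combined with the hypothesis. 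Choosing $e$ so that $2\le|B'|<|B|$, the inductive hypothesis gives that $A'$ and $B'$ are APs with a common difference $d$, and one then argues that this propagates back to $A$ and $B$ being APs with common difference $d$.

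The main obstacle is precisely this inductive step. First, one must verify that an $e$ with $2\le|B'|<|B|$ exists; this is where the assumptions $|B|\ge 3$ and $|A+B|\le p-2$ actually come in, since one needs both enough room in $B$ to split off a proper subset and a genuine element $e$ that is not absorbed trivially. Second, and more delicate, is the backward step: transferring the AP structure from the transformed pair $(A',B')$ to the original pair $(A,B)$ requires carefully tracking which elements of $B+e$ were absorbed into $A'$ and which survived into $B'$, and ruling out configurations that are consistent with $(A',B')$ being arithmetic progressions but would produce a non-AP for $A$ or $B$. It is in navigating this case analysis that the role of the hypothesis $|A+B|\le p-2$, as opposed to the separate treatment of $|A+B|=p-1$ above, becomes visible.
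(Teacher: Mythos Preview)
The paper does not give its own proof of Vosper's theorem; it is quoted as a known result with a reference to \cite[Theorem 8.1]{grynkiewicz_book}. So there is nothing in the paper to compare your argument against.

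Regarding your sketch itself: the case $|A+B|=p-1$ is correct and complete. For the case $|A+B|\le p-2$, the outline via induction on $|B|$ and the Dyson $e$-transform is indeed the classical route, and your base case $|B|=2$ is handled correctly. But the inductive step, as you openly acknowledge, is not carried out. Two genuine gaps remain. First, an $e$ with $2\le |B'|<|B|$ need not exist; the cases $|B'|\in\{0,1\}$ occur and must be treated separately (this is typically where one first extracts that $A$ is already a union of cosets of some nontrivial subgroup or an arithmetic progression). Second, the backward transfer of the AP structure from $(A',B')$ to $(A,B)$ is the heart of the proof and involves a real case analysis, not just bookkeeping; knowing that $A' = A\cup(B+e)$ is an AP with difference $d$ does not by itself force $A$ to be one. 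What you have written is an accurate roadmap of the standard proof, but it is not yet a proof.
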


We also need the analogue of the Theorem of Cauchy--Davenport for restricted set addition. It is called the Theorem of Dias da Silva--Hamidoune (see for example \cite[Theorem 22.5]{grynkiewicz_book}). 
\begin{theorem}[Dias da Silva--Hamidoune]
\label{thmDH}
Let $p$ be a prime number.  Let $A \subset C_p$ be a non-empty subset and let $h \in [1, |A|]$. Then:
\begin{equation*}
|\Sigma_h (A)| \ge \min \bigl\{ p, h (|A|-h) + 1 \bigr\}
\end{equation*}
\end{theorem}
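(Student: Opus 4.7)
The plan is to apply Alon's Combinatorial Nullstellensatz. Set $n = |A|$ and $m = h(n-h)$; since $|\Sigma_h(A)| \le p$ trivially, we may assume $m + 1 \le p$ and suppose toward a contradiction that $|\Sigma_h(A)| \le m$. Because $m < p$, enlarge $\Sigma_h(A)$ arbitrarily to a subset $\Sigma' \subset C_p$ of cardinality exactly $m$, and form
\[
P(x_1, \ldots, x_h) \;=\; \prod_{1 \le i < j \le h}(x_j - x_i) \;\cdot\; \prod_{s \in \Sigma'}(x_1 + \cdots + x_h - s) \;\in\; \mathbb{F}_p[x_1,\ldots,x_h].
\]
This polynomial has degree exactly $\binom{h}{2} + m$ and vanishes on $A^h$: if two of the arguments coincide the Vandermonde factor is zero, while otherwise the coordinate sum lies in $\Sigma_h(A) \subseteq \Sigma'$.

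Next I would extract the coefficient of the monomial $x_1^{n-h} x_2^{n-h+1} \cdots x_h^{n-1}$, whose total degree $\binom{h}{2} + m$ matches $\deg P$. Hence only the top-degree piece $\prod_{i<j}(x_j - x_i) \cdot (x_1 + \cdots + x_h)^m$ contributes; expanding the Vandermonde as a signed sum over $S_h$ and the power via the multinomial theorem reduces the coefficient to the alternating sum
\[
\sum_{\sigma \in S_h} \operatorname{sgn}(\sigma) \binom{m}{n - h + 1 - \sigma(1),\; n - h + 2 - \sigma(2),\; \ldots,\; n - \sigma(h)}.
\]
A standard manipulation (essentially the Weyl dimension, or hook-content, formula for the rectangular shape $h \times (n-h)$) identifies this sum with $m!$ divided by a product of factorials of integers in $[0, n-1] \subseteq [0, p-1]$, hence it is nonzero in $\mathbb{F}_p$.

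Setting $t_i = n - h + i - 1 < |A|$ for each $i$, Alon's Combinatorial Nullstellensatz then produces $(b_1, \ldots, b_h) \in A^h$ with $P(b_1, \ldots, b_h) \neq 0$, contradicting the vanishing above. The main obstacle is the coefficient evaluation in the middle step, and it is precisely there that the prime hypothesis is essential, since the argument relies on the auxiliary factorials being invertible in the coefficient ring; this also explains why the bound does not immediately generalize beyond $C_p$.
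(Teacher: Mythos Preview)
The paper does not give its own proof of this theorem: it is quoted in the preliminaries with a reference to \cite[Theorem~22.5]{grynkiewicz_book} and then used as a black box in Lemma~\ref{lemma_main}. So there is nothing to compare your argument against on the paper's side.

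That said, your sketch is the standard Alon--Nathanson--Ruzsa proof via the Combinatorial Nullstellensatz, and it is essentially correct. A couple of small points worth tightening. First, in the coefficient computation, the alternating multinomial sum is $m!\,\det\bigl(1/(n-h+i-j)!\bigr)_{i,j}$, and its closed form is not literally ``$m!$ divided by a product of factorials'': the evaluation also carries a superfactorial $\prod_{k=1}^{h-1}k!$ in the numerator. This does not affect the conclusion, since every factorial appearing is of an integer in $[0,n-1]\cup\{m\}\subset[0,p-1]$ and hence is a unit in $\mathbb{F}_p$, but the description you gave is slightly off. Second, you should note explicitly that $n\le p$ (since $A\subset C_p$), which is what guarantees $t_h=n-1<p$ and that $(n-1)!$ is invertible; you use this implicitly. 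With those two clarifications the argument is complete.
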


We end this section with two technical lemmas. The first asserts that, except for some corner-cases, 
the difference of an arithmetic progression in a cyclic group of prime order is, up to sign,  uniquely determined. 
We include a proof as we could not find a suitable reference. 

\begin{lemma}
\label{lem_diff_ap}
Let $p \geq 5$ be a prime number and let $A \subset C_p$ be a set such that $|A|=k$ with $2 \le k \le p-2$. Assume that  
$A$ is an arithmetic progression, that is, there are some $r,a \in C_{p}$, such that $A=\{a+ir\colon i \in [0,k-1] \}$. The difference $r$ is determined uniquely up to sign, that is, 
if there are some $s,b \in C_{p}$ such that $A=\{b+is\colon i \in [0,k-1] \}$, then $s \in \{r,-r\}$. 
\end{lemma}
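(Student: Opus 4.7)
The plan is to attach to $A$ a parametrization-free invariant that encodes the difference up to sign. For any subset $X \subset C_p$ and any $d \in C_p$, the quantity $N_X(d) := |X \cap (X+d)|$ depends only on $X$. I would show that if $X$ is an arithmetic progression of difference $r$ and length $k$ with $2 \le k \le p - 2$, then
\begin{equation*}
D(X) := \{d \in C_p \setminus \{0\} : N_X(d) = |X| - 1\} = \{r, -r\}.
\end{equation*}
Granting this, applying the identity to both hypothetical AP-representations of the set $A$ gives $\{r, -r\} = D(A) = \{s, -s\}$, whence $s \in \{r, -r\}$, as desired.

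To verify the identity, write $X = \{a + ir : i \in [0, k-1]\}$ and, for $d \neq 0$, write $d = mr$ with $m \in [1, p-1]$; this is possible since $r$ generates $C_p$. Then $N_X(mr)$ counts pairs $(i,j) \in [0, k-1]^2$ with $j \equiv i + m \pmod{p}$. Because $|j - i| \le k - 1 < p$, only the two cases $j - i = m$ and $j - i = m - p$ can occur, contributing $\max(0, k - m)$ and $\max(0, k + m - p)$ pairs respectively, so
\begin{equation*}
N_X(mr) = \max(0, k - m) + \max(0, k + m - p).
\end{equation*}
Setting this equal to $k - 1$ and examining the regimes of $m$: for $m \in [1, k-1]$ the first summand alone gives $m = 1$; symmetrically, for $m \in [p-k+1, p-1]$ the second summand alone gives $m = p - 1$; for $m \in [k, p - k]$ the sum vanishes; and in the possible overlap $m \in [p-k+1, k-1]$ (non-empty only when $2k > p + 1$), both summands are positive and their sum equals $2k - p$, which is $k - 1$ only for $k = p - 1$, a case excluded by hypothesis. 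Hence $D(X) = \{r, -r\}$.

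The main technical obstacle is the overlap analysis, and it is precisely there that the bound $k \le p - 2$ is essential: for $k = p - 1$, $A$ is the complement of a singleton and one computes $|A \cap (A + d)| = p - 2 = k - 1$ for every non-zero $d$, so the lemma would indeed fail in that range. The restriction $p \ge 5$ is only needed to make the range $2 \le k \le p - 2$ non-empty.
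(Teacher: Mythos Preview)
Your proof is correct and takes a genuinely different route from the paper's. The paper first passes to the complement to assume $k \le (p-1)/2$, normalises via an affine map to $A = \{0, e, \dots, (k-1)e\}$, and then argues that for any putative second difference $r = r'e$ with $r' \in [2,(p-1)/2]$ the element $(k-1)e + r$ falls outside $A$, forcing $(k-1)e$ to be the terminal element for both differences; repeating this after deleting $(k-1)e$ yields $r = e$. Your argument instead attaches the intrinsic invariant $D(A) = \{d \neq 0 : |A \cap (A+d)| = k-1\}$ and computes it by counting pairs, which avoids both the complementation step and the normalisation. What your approach buys is a clean, parametrisation-free statement and a transparent explanation of exactly where the bound $k \le p-2$ enters (the overlap regime gives $N = 2k - p$, forcing $k = p - 1$). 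What the paper's approach buys is brevity once the reductions are in place, and it foregrounds the complement symmetry. Both arguments use the primality of $p$ only implicitly, through the fact that any nonzero difference generates $C_p$; your explicit use of this when writing $d = mr$ makes that dependence visible.
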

\begin{proof}
Since $A$ is an arithmetic progression with difference $r$ if and only if the the complement of $A$ in $C_p$ is an arithmetic progression with difference $r$, 
we can assume that $|A| \le \frac{p-1}{2}$.  Let $e$ be some non-zero element of $C_p$.

As the problem is invariant under affine transformations, we can assume without loss of generality that $A= \{0, e, 2e, \dots , (k-1)e\}$.
Suppose for a contradiction that $A= \{a+ir \colon i \in [0,k-1] \}$ with $a,r \in C_p$ and $r \notin \{e,-e\}$. Without loss of generality we can assume that $r= r' e$ with $r' \in [2, \frac{p-1}{2}]$.

As 
\[
k-1 < k-1+r' \le \frac{p-1}{2}+\frac{p-1}{2}-1=p-2 < p,
\] it follows that $(k-1)e + r \notin A$. It follows that $(k-1)e$ is also the last element of the arithmetic progression $A$ 
when represented with respect to the difference $r$. That is, $(k-1)e = a + (k-1)r$.  

The same reasoning shows that, when removing the element $(k-1)e$ from $A$ then $(k-2)e$ is the last element of arithmetic progression $A \setminus \{(k-1)e\}$ both with respect to the difference $r$ and $e$. Consequently,  $(k-2)e+ r = (k-1)e$. Thus, $r=e$.  
\end{proof}

When trying to establish the existence of zero-sum  subsequences  whose length is close to that of the full sequence, 
it can be  advantageous to work instead with the few elements of the sequence not contained in the putative subsequence. 
We formulate the exact link in the lemma below.   

\begin{lemma}
\label{complement}
Let $G$ be a finite abelian group. Let $0 \le r\le k$.
The following statements are equivalent. 
\begin{itemize}
\item Every squarefree sequence $S$ over $G$ of length $k$ has a subsequence $R$ of length $r$ with $\sigma(S) = \sigma (R)$. 
\item Every squarefree sequence $S$ over $G$ of length $k$ has a zero-sum subsequence $T$ of length $k-r$. 
\end{itemize}
\end{lemma}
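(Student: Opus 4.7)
The plan is to exploit the complementary-subsequence map $R \mapsto T = R^{-1}S$, which is a length-preserving (in the complementary sense) involution on subsequences of $S$.

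First I would fix a squarefree sequence $S$ over $G$ of length $k$ and observe that for any subsequence $R \mid S$ the complementary subsequence $T = R^{-1}S$ satisfies $|T| = k - |R|$ and, since sequence multiplication in the free abelian monoid corresponds to concatenation, one has the identity
\[
\sigma(R) + \sigma(T) = \sigma(S).
\]
Thus $R$ has length $r$ and sum $\sigma(R) = \sigma(S)$ if and only if its complement $T$ has length $k - r$ and sum $\sigma(T) = 0$. In particular, $R \mapsto R^{-1}S$ gives a bijection between length-$r$ subsequences of $S$ with sum $\sigma(S)$ and length-$(k-r)$ zero-sum subsequences of $S$.

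To finish, I would simply note that the equivalence of the two bulleted statements follows by applying this bijection universally over all squarefree sequences $S$ of length $k$: the existence of a length-$r$ subsequence with sum $\sigma(S)$ is equivalent, via the bijection, to the existence of a length-$(k-r)$ zero-sum subsequence. No nontrivial obstacle arises here; the content of the lemma is merely a reformulation of the additivity of $\sigma$ under the decomposition $S = R \cdot T$, together with the fact that passing to the complement preserves squarefreeness.
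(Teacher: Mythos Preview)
Your proof is correct and follows essentially the same approach as the paper: both arguments rest on the complementary decomposition $S = R \cdot T$ and the identity $\sigma(R) + \sigma(T) = \sigma(S)$, so that a length-$r$ subsequence with sum $\sigma(S)$ corresponds exactly to a length-$(k-r)$ zero-sum subsequence. The only cosmetic difference is that you phrase it as a bijection, while the paper spells out the two directions separately.
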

\begin{proof}
Let $S$ be a squarefree sequence of length $k$.
Now, let $R$ be a subsequence of length $r$ with $\sigma(R)= \sigma(S)$. Then the sequence $T=R^{-1}S$ is a sequence of length $k-r$ with sum $\sigma(S)-\sigma(R) = 0$. 
Conversely,  let $T$ be a zero-sum subsequence of length $k-r$. Then the sequence $R=T^{-1}S$ is a sequence of length $k-(k-r)=r$ with sum $ \sigma(R)=\sigma(S)-\sigma(T) = \sigma (S)$. 
\end{proof}

\section{Main result}

As mentioned in the introduction our main result is the exact value of the Harborth constant for groups of the form $C_3 \oplus C_{3n}$ where $n$ is prime. 
\begin{theorem}
\label{thm_main}
Let $p$ be a prime number. 
Then 
\[
\g(C_3 \oplus C_{3p})  = 
\begin{cases}  
3p + 3 & \text{ for } p \neq 3 \\
3p + 4 & \text{ for } p = 3 
\end{cases}
\]
\end{theorem}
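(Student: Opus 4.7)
The plan is as follows. For the lower bound, when $p \neq 3$ we have Kiefer's result $\g(C_3 \oplus C_{3n}) \ge 3n+3$ cited in the introduction. For $p = 3$, one needs to exhibit (by hand or by computer search) a squarefree subset of $C_3 \oplus C_9$ of cardinality $12$ that has no $9$-term subset summing to $0$, establishing $\g(C_3 \oplus C_9) \ge 13$.

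For the upper bound when $p \neq 3$, the first step is Lemma~\ref{complement} with $k = 3p+3$ and $r = 3$: it suffices to show that every squarefree sequence $S$ over $G = C_3 \oplus C_{3p}$ of length $3p+3$ contains a $3$-term subsequence $R$ with $\sigma(R) = \sigma(S) =: s$. Since $p \neq 3$, we have the splitting $G \cong C_3 \oplus C_3 \oplus C_p$; fix the projection $\pi \colon G \to C_p$ (with kernel $H \cong C_3^2$) and the complementary map $\varphi \colon G \to H$. For $a \in C_p$, set $V_a = \varphi\bigl(\pi^{-1}(a) \cap \supp(S)\bigr) \subseteq H$, $m_a = |V_a|$, and $A = \{a \in C_p : m_a \ge 1\}$, so that $\sum_a m_a = 3p+3$. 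The task reduces to finding $a_1, a_2, a_3 \in A$ (not necessarily distinct) and $v_i \in V_{a_i}$ satisfying $a_1+a_2+a_3 = \pi(s)$, $v_1+v_2+v_3 = \varphi(s)$, with the points $(v_i, a_i)$ pairwise distinct in $G$.

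The argument then branches on the size of $A$. When $|A|$ is large, Dias da Silva--Hamidoune (Theorem~\ref{thmDH}) yields $\Sigma_3(A) = C_p$, so many unordered triples in $A$ sum to $\pi(s)$; for each such triple the $a_i$ are distinct, hence the $(v_i, a_i)$ automatically so, and it remains only to find \emph{one} triple for which $V_{a_1} + V_{a_2} + V_{a_3}$ contains $\varphi(s) \in C_3^2$ --- obtainable by counting or averaging over admissible triples. When $|A|$ is small, pigeonhole forces several $V_a$'s to be large subsets of the nine-element group $H$, and triples with coinciding $a_i$ (treated via $V_a \widehat{+} V_a + V_b$ or $\Sigma_3(V_a)$) become available. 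The intermediate regime is the delicate one: Cauchy--Davenport (Theorem~\ref{CDthm}) controls $|A+A+A|$ in $C_p$, and borderline equality forces $A$ to be an arithmetic progression via Vosper (Theorem~\ref{vosper}) and Lemma~\ref{lem_diff_ap}, whereupon this arithmetic-progression structure must be exploited explicitly.

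For $p = 3$ the approach above is unavailable because $C_9 \not\cong C_3 \oplus C_3$; a direct case analysis, combined with the computational verification mentioned in the abstract, is the anticipated route. The main obstacles I foresee are (i) guaranteeing that among the many Dias da Silva--Hamidoune triples in $A$, at least one lifts to a valid triple in $H$ rather than merely existing in projection; (ii) the Vosper regime, where the arithmetic-progression structure of $A$ must be leveraged to produce the desired $3$-term subsequence; and (iii) the self-contained handling of $p = 3$, where the lack of a clean cyclic-of-prime-order projection forces an ad hoc argument.
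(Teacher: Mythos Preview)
Your overall reduction---use Lemma~\ref{complement} to replace the search for a $3p$-term zero-sum subsequence by the search for a $3$-term subsequence $R$ with $\sigma(R)=\sigma(S)$, and exploit the splitting $G \cong C_3^2 \oplus C_p$ when $p \neq 3$---matches the paper. But you slice the product in the \emph{opposite} direction. The paper partitions $S$ according to its $C_3^2$-component, obtaining nine subsequences $S_h$ ($h \in H_1 \cong C_3^2$), and then applies Cauchy--Davenport, Dias da Silva--Hamidoune, and Vosper to the sets $\supp(\pi_2(S_h)) \subseteq C_p$. The workhorse (Lemma~\ref{lemma_main}) says: if one can find $x,y,z \in H_1$ with $x+y+z = \pi_1(\sigma(S))$ and $|S_x|+|S_y|+|S_z|$ (or the appropriate variant with repetitions) above a threshold of order $p$, then the sumset in $C_p$ is everything and the desired $R$ exists. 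Because $H_1$ has only nine elements, locating such a triple is a bounded pigeonhole problem on the nine bucket sizes; Propositions~\ref{prop_zero_full}, \ref{prop_zero}, \ref{prop_nonzero} carry this out according to whether $\pi_1(\sigma(S))=0$ and whether $\supp(\pi_1(S))=H_1$, with Vosper invoked only in the tight subcase of Proposition~\ref{prop_zero_full}.

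Your decomposition instead partitions by the $C_p$-component, producing fibres $V_a \subseteq C_3^2$ and a support set $A \subseteq C_p$, so that the additive tools act on $A$ while the remaining condition lives in $C_3^2$. The step you label obstacle~(i) is a genuine gap rather than a detail: knowing that many triples $(a_1,a_2,a_3)$ in $A$ hit $\pi(s)$ does not, by any routine counting or averaging, force one of them to satisfy $V_{a_1}+V_{a_2}+V_{a_3}\ni\varphi(s)$. For instance, if all but a bounded number of fibres equal a fixed line $L\le C_3^2$, then $V_{a_1}+V_{a_2}+V_{a_3}=L$ for almost all admissible triples, and one must chase the few exceptional fibres by hand---essentially rebuilding a case analysis in $C_3^2$ anyway. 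The paper's orientation avoids this completely: once a suitable triple in the nine-element group is fixed, the $C_p$-sumset bound delivers every target at once. Note also that the paper handles $p=2$ (as well as $p=3$) by computation, not by the additive argument, so folding $p=2$ into your general scheme would require its own justification.
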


We start by establishing that  those values are lower bounds for the Harborth constant. Then, we establish the existences of the zero-sum subsequences that we need under 
under several additional assumptions on the sequences. Finally, we combine all these results.

\subsection{Lower bounds}
\label{ssec_lower}

In this section we establish lower bounds for the Harborth constant. We start with a general lemma. An interesting aspect of this lemma is that it mixes constants for squarefree sequences and sequences; it improves the result \cite[Lemma 3.2]{mors1}, where instead of the Davenport constant the Olson constant was used. 
\begin{lemma}
\label{lbound_general}
Let $ G_1, G_2 $ be finite abelian groups with $\exp(G_2) \mid \exp(G_1)$. 
Then
\[
\g(G_1 \oplus G_2) \ge  \g(G_1) \oplus  \mathsf{D}(G_2) - 1.
\]
\end{lemma}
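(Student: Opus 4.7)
\emph{Proof plan.} The statement is a lower bound, so the plan is to exhibit a squarefree sequence over $G_1 \oplus G_2$ of length $\g(G_1) + \mathsf{D}(G_2) - 2$ that has no zero-sum subsequence of length $\exp(G_1 \oplus G_2)$. Observe first that because $\exp(G_2) \mid \exp(G_1)$, one has $\exp(G_1 \oplus G_2) = \exp(G_1)$, so it suffices to avoid zero-sum subsequences of length $\exp(G_1)$.

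From the definition of $\g(G_1)$ pick a squarefree sequence $S_1 = a_1 \cdots a_m$ over $G_1$ with $m = \g(G_1) - 1$ and no zero-sum subsequence of length $\exp(G_1)$. From the definition of $\mathsf{D}(G_2)$ pick a zero-sum free sequence $T$ over $G_2$ of length $\mathsf{D}(G_2) - 1$. I would then lift $T$ to a squarefree sequence $T'$ over $G_1 \oplus G_2$ as follows: for each $t \in \supp(T)$ with multiplicity $v_t$ in $T$, replace the $v_t$ copies of $t$ by pairs $(h, t)$ using $v_t$ pairwise distinct first coordinates $h \in G_1$. This is possible because $T$ being zero-sum free forces $v_t \le \ord(t) - 1 \le \exp(G_2) - 1 \le \exp(G_1) - 1 < |G_1|$, using again $\exp(G_2) \mid \exp(G_1)$ and $\exp(G_1) \le |G_1|$. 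Now set $S = (a_1, 0)(a_2, 0) \cdots (a_m, 0) \cdot T'$. The two blocks are individually squarefree, and they share no element because every term of $T'$ has nonzero second coordinate (the element $0$ cannot appear in a zero-sum free sequence). Hence $S$ is squarefree of length $m + (\mathsf{D}(G_2) - 1) = \g(G_1) + \mathsf{D}(G_2) - 2$.

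The key verification is that $S$ has no zero-sum subsequence of length $\exp(G_1)$. Suppose $U \mid S$ satisfies $|U| = \exp(G_1)$ and $\sigma(U) = 0$. Applying the projection $G_1 \oplus G_2 \to G_2$ to the identity $\sigma(U) = 0$, the contribution of the $(a_i, 0)$ terms vanishes, so the second coordinate of $\sigma(U)$ equals $\sigma(R)$ for some subsequence $R$ of $T$. Since $T$ is zero-sum free, this forces $R = 1$, i.e.\ $U$ is a subsequence of the first block $(a_1, 0) \cdots (a_m, 0)$. Projecting $U$ to $G_1$ then yields a zero-sum subsequence of $S_1$ of length $\exp(G_1)$, contradicting the choice of $S_1$. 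This gives the desired lower bound.

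The only subtle point is the lifting step: one must be able to distinguish repeated occurrences of the same $t$ in $T$ by choosing distinct first coordinates, which is precisely where the hypothesis $\exp(G_2) \mid \exp(G_1)$ is used (beyond guaranteeing the equality of exponents). This is also the reason the improvement over \cite[Lemma 3.2]{mors1} is possible: the Davenport constant allows repetition in $T$, and the extra room in $G_1$ absorbs that repetition. I expect this bookkeeping to be the only real obstacle; everything else is a direct projection argument.
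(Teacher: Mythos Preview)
Your proposal is correct and follows essentially the same approach as the paper: both take an extremal squarefree sequence $S_1$ over $G_1$ and an extremal zero-sum free sequence over $G_2$, lift the latter to a squarefree sequence in $G_1\oplus G_2$ by attaching distinct $G_1$-coordinates to repeated terms (using $v_t<\exp(G_2)\le\exp(G_1)\le|G_1|$), and then argue via the projection onto $G_2$ that any zero-sum subsequence of length $\exp(G_1)$ would have to lie entirely in the $S_1$-block. Your write-up is in fact slightly more careful than the paper's in verifying that the concatenated sequence is squarefree.
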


\begin{proof}
Let $S_1$ be a squarefree sequence over $G_1$ of length $\g(G_1) - 1$ that has no zero-sum subsequence of length $\exp(G_1)$. 
Let $S_2'$ be a sequence over $G_2$ of length $\mathsf{D}(G_2) - 1$ that has no zero-sum subsequence.
Suppose $S_2' = \prod_{g \in G_2} g^{v_g}$. Since $S_2'$ is zero-sumfree $ v_g < \exp(G_2) \le \exp(G_1)$ for each $g \in G_2$.
Let $\{h_1, \dots, h_{\exp(G_1)-1}\}$ be distinct elements in $G_1$, and let  
\[
S_2 = \prod_{g \in G_2} \left( \prod_{i=1}^{v_g}(g+h_i) \right).
\]
Then, $S_2$ is a squarefree zero-sum free sequence over $G_1 \oplus G_2$.
To show our claim, it suffices to show that  $S_1S_2$ has no zero-sum subsequence of length $\exp(G_1 \oplus G_2)$.
Assume to the contrary that $T \mid S_1 S_2$ is a zero-sum subsequence of length $\exp(G_1 \oplus G_2)$.
Let $T= T_1 T_2$ with $T_i \mid S_i$. 
Since $\exp(G_1 \oplus G_2) = \exp(G_1)$, it follows that $T$ is not a subsequence of $S_1$, that is, $T_2$ is not the empty sequence. 
Let  
\[
\pi \colon 
\begin{cases} 
G & \to G_2 \\
g=g_1 +g_2 & \mapsto g_2 
\end{cases}
\] 
where $g_i\in G_i$ is the unique elements such that $g = g_1 +g_2$, in other words it is the projection on $G_2$.  
Since $\sigma(\pi (T_1))=0$, it follows that $\sigma(\pi (T_2))=0$. 
Yet this is a contradiction, as $\pi (T_2)$ is a non-empty zero-sum subsequence of $S_2'$, 
while by assumption  $S_2'$ has no non-empty zero-sum subsequence. 
\end{proof}

Using this lemma in combination with the result for cyclic groups, yields the following bound, which is given in \cite[Proposition F.102]{bajnok}.
\begin{lemma}
\label{lem_lbsimple}
Let $n_1, n_2$ be strictly positive integers with $n_1 \mid n_2$. Then 
\[
\g(C_{n_1} \oplus C_{n_2})\ge 
\begin{cases}
 n_1+n_2 - 1 &  \text{if  $n_2$ is odd}\\
 n_1+n_2  & \text{if  $n_2$ is even}
\end{cases}.
\]
In particular, 
\[
\g(C_{3} \oplus C_{3n})\ge 
\begin{cases}
 3n+2 &  \text{if  $n$ is odd}\\
 3n+3 & \text{if  $n$ is even}
\end{cases}.
\]
\end{lemma}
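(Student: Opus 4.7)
The plan is to apply Lemma \ref{lbound_general} with $G_1 = C_{n_2}$ and $G_2 = C_{n_1}$, which is legitimate since $\exp(C_{n_1}) = n_1$ divides $\exp(C_{n_2}) = n_2$ by hypothesis. This gives
\[
\g(C_{n_1} \oplus C_{n_2}) \ge \g(C_{n_2}) + \mathsf{D}(C_{n_1}) - 1.
\]

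Next I would plug in the two classical ingredients already recalled (or standard) in the paper: the Davenport constant of a cyclic group, namely $\mathsf{D}(C_{n_1}) = n_1$, and the value of the Harborth constant of a cyclic group from the introduction, namely $\g(C_{n_2}) = n_2$ when $n_2$ is odd and $\g(C_{n_2}) = n_2 + 1$ when $n_2$ is even. Substituting these yields
\[
\g(C_{n_1} \oplus C_{n_2}) \ge n_1 + n_2 - 1 \quad \text{or} \quad \g(C_{n_1} \oplus C_{n_2}) \ge n_1 + n_2,
\]
according to the parity of $n_2$, which is exactly the claimed general bound.

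For the specialization, I would simply apply the general inequality with $n_1 = 3$ and $n_2 = 3n$. Observe that $3n$ is odd precisely when $n$ is odd and $3n$ is even precisely when $n$ is even. In the odd case the bound reads $3 + 3n - 1 = 3n+2$, and in the even case it reads $3 + 3n = 3n+3$, matching the two cases of the statement.

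There is no real obstacle here: the entire proof is a direct substitution into Lemma \ref{lbound_general} together with the known values of $\mathsf{D}(C_{n_1})$ and $\g(C_{n_2})$. The only point requiring a brief justification is the applicability of the previous lemma, i.e. the divisibility condition on the exponents, which is immediate from $n_1 \mid n_2$.
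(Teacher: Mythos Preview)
Your proposal is correct and follows essentially the same approach as the paper's own proof: apply Lemma~\ref{lbound_general} with $G_1=C_{n_2}$ and $G_2=C_{n_1}$, then substitute the known values $\mathsf{D}(C_{n_1})=n_1$ and $\g(C_{n_2})=n_2$ or $n_2+1$ according to the parity of $n_2$, and finally specialize to $n_1=3$, $n_2=3n$.
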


\begin{proof}
By Lemma \ref{lbound_general} we have 
$\g(C_{n_1} \oplus C_{n_2}) \ge  \g(C_{n_2}) \oplus  \mathsf{D}(C_{n_1}) - 1$.
The claim follows using that  
\[ 
\g(C_{n_2}) = 
\begin{cases}
 n_2 &  \text{if  $n_2$ is odd}\\
 n_2 + 1 & \text{if  $n_2$ is even}
\end{cases}
\]
and $ \mathsf{D}(C_{n_1})= n_1$ (see, e.g., \cite[Theorem 10.2]{grynkiewicz_book}).  The claim for $C_3 \oplus C_{3n}$ is a direct consequence.
\end{proof}

The bound  for $\g(C_{3} \oplus C_{3n})$ can be improved for odd $n$. 
This was initially done by Kiefer \cite{kiefer} (also see \cite[Proposition F.104]{bajnok}).
We include the argument, as our construction is slightly different.  
\begin{lemma}  
\label{lem_lbimproved}
Let $G=C_{3} \oplus C_{3n}$ with an integer $n \ge 2$.
Then $\g(G) \ge 3n+3$. 
\end{lemma}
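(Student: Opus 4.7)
The plan is as follows. By Lemma \ref{lem_lbsimple} the bound already holds when $n$ is even, so the focus is on the case of odd $n$ (hence $n \ge 3$). By Lemma \ref{complement} applied with $k = 3n + 2$ and $r = 2$, it suffices to exhibit a squarefree sequence $S$ over $G$ of length $3n + 2$ such that $\sigma(S)$ is not the sum of two distinct terms of $S$. Writing $G = \langle e_1 \rangle \oplus \langle e_2 \rangle$ with $\ord(e_1) = 3$ and $\ord(e_2) = 3n$, and $H = \langle e_2 \rangle$, my strategy is to place most of the terms of $S$ inside $H$ (to gain length cheaply) and add a few terms outside $H$, chosen so that $\sigma(S)$ itself lies outside $H$; this last property is what kills, via the projection $\pi \colon G \to G/H \cong C_3$, every pair whose sum would otherwise land in $H$.

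Concretely I would try
\[
S = e_1 \cdot 2 e_1 \cdot (2 e_1 + 2 e_2) \cdot (2 e_1 - e_2) \cdot \prod_{i=2}^{3n-1}(i e_2),
\]
a squarefree sequence of length $3n + 2$. A short computation, crucially using the identity $\sum_{i=0}^{3n-1} i \equiv 0 \pmod{3n}$ which holds precisely when $n$ is odd, yields $\sigma(S) = e_1$. Then one has to check that no sum of two distinct terms equals $e_1$, splitting according to $\pi$: pairs of two $H$-terms project to $0$; pairs combining an $H$-term with one of the three outside terms having $\pi(g) = 2 e_1$ project to $2 e_1$; what remains are the six pair-sums among the four outside terms together with the family of pairs $i e_2 + e_1$ with $i \in [2, 3n - 1]$, and these can be handled by direct inspection.

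The main obstacle is producing such an $S$. There is a genuine tension: one would like $|S \cap H|$ close to $3n$ for length, but as soon as $|S \cap H| \ge 3n - 1$ the two-fold restricted sumset $\Sigma_2(S \cap H)$ covers all of $H$, so any $\sigma(S) \in H$ would automatically be realized as a pair-sum inside the $H$-part. Hence one must force $\sigma(S) \notin H$. For $n$ even this is easy, because $\sigma(H) \ne 0$ already suffices, which is why Lemma \ref{lem_lbsimple} is enough there; but for $n$ odd both $\sigma(H)$ and $\sigma(H \setminus \{0\})$ vanish, so one has to shrink the $H$-part further and then compensate with carefully chosen outside terms that do not introduce new forbidden pair-sums. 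Identifying such a configuration is the only delicate point, and it is essentially what dictates the specific shape of the construction above.
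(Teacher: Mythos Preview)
Your proof is correct. The sequence $S$ is indeed squarefree of length $3n+2$ (for odd $n \ge 3$ the four outside terms $(1,0),(2,0),(2,2),(2,3n-1)$ are distinct), your computation $\sigma(S)=e_1$ is right, and the pair-check goes through: the only pairs whose projection to $G/H\cong C_3$ equals $\bar e_1$ are the three pairs among $\{2e_1,\,2e_1+2e_2,\,2e_1-e_2\}$ (sums $e_1+2e_2$, $e_1-e_2$, $e_1+e_2$, none equal to $e_1$) and the pairs $e_1+ie_2$ with $i\in[2,3n-1]$ (never $e_1$). In fact three of the six ``outside'' pairs you list, namely those involving $e_1$, project to $0$ and could have been discarded immediately, but treating them by inspection is harmless.

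Your route is genuinely different from the paper's. The paper places the bulk of the sequence ($3n-3$ terms) in the coset $e_1+\langle e_2\rangle$ and five terms in $\langle e_2\rangle$, then argues \emph{directly} about zero-sum subsequences of length $3n$: the projection to $\langle e_1\rangle$ forces the $(e_1+\langle e_2\rangle)$-part of any such subsequence to be all of $T_1$, and then one is left with showing the five-term sequence $0(e_2)(2e_2)(3e_2)(-6e_2)$ has no zero-sum subsequence of length $3$. You instead place the bulk ($3n-2$ terms) in $\langle e_2\rangle$, put four terms outside, and invoke Lemma~\ref{complement} up front to reduce everything to a pair-check in $G$; this is slightly more economical and mirrors the strategy the paper adopts later for the upper bound. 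The paper's construction has the virtue of making the role of ``$T_2$ has no short zero-sum'' completely transparent, which connects more visibly with the Davenport-constant intuition behind Lemma~\ref{lbound_general}.
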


\begin{proof}
To prove this lemma, it suffices to give an example of a squarefree sequence of length $3n+2$ that does not admit a zero-sum subsequence of length $\exp(G)=3n$.
Let $G=\langle e_{1} \rangle \oplus \langle e_{2}\rangle $ with $\ord(e_{1})=3$ and $\ord(e_{2})=3n$. 
Let $ \pi' $ and $\pi''$ denote the usual maps $\pi': G \to \langle e_1 \rangle $ and $\pi'': G \to \langle e_2 \rangle$. 

Further, let $T_{1}=  \prod_{g \in \langle e_{2}  \rangle \setminus \{0,-e_{2}, e_{2} \} } (e_{1}+g)$  and 
$T_{2}=  0 (e_{2})(2e_{2})(3e_{2})(-6e_{2})$. 
Then $T = T_1 T_2$ is a squarefree sequence and  $|T|=3n-3+5=3n+2$. 

To obtain the claimed bound, it suffices to assert that $T$ does not have a zero-sum subsequence of length $3n$. 
Assume for a contradiction that $T$ has a zero-sum subsequence $R$ of length $3n$. Clearly, one has $\sigma (\pi'(R))=\sigma (\pi''(R))=0$.
Let $R=R_{1} R_{2}$ with $R_{1}|T_{1}$ and $R_{2}|T_{2}$. Note that $\sigma(\pi'(R))=\sigma(\pi'(R_{1}))=|R_{1}|e_{1}$. 
Consequently, as $\sigma(\pi'(R))=0$ it is necessary that $3$ divides $ |R_{1}|$. 
Moreover to obtain $|R|=3n$ it is necessary that $3n-5 \le |R_{1}| \le 3n-3$. It follows that $|R_{1}|=3n-3$, that is, $R_{1}=T_{1}$. Consequently $|R_2|=3$.

Now, $\sigma(\pi'(R_{1}))=|R_{1}|e_{1}= 0$. Furthermore  
\[
\sigma(\pi''(R_{1})) = \sum_{h \in \langle e_2 \rangle \setminus \{-e_2, e_2, 0\}} h  = \Bigl( \sum_{h \in \langle e_2 \rangle}  h \Bigr)  - (-e_2 + e_2 + 0), 
\]
which is also equal to $0$, since the sum of all elements of the cyclic group $\langle e_{2}\rangle$ is $0$ (here it is used that $3n$ is odd).  
Thus, $\sigma(R_1) = 0$, and it follows that: $\sigma(R) = 0$ if and only if $\sigma (R_2)= 0$.  However,  $T_{2}$ has no subsequence of length $3$ with sum $0$. 
Thus  $T$ has no zero-sum subsequence of length $3n$.
\end{proof}

It turns out that for $n=3$, there is a better construction.

\begin{lemma}  
\label{lb_C3C9}
One has $ \g(C_{3}\oplus C_{9})\geq 13$.
\end{lemma}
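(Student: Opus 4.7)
The plan is to exhibit an explicit squarefree sequence $S$ over $G = C_3 \oplus C_9$ of length $12$ that has no zero-sum subsequence of length $\exp(G)=9$; this immediately gives $\g(C_3 \oplus C_9) \ge 13$. Note that the construction of Lemma \ref{lem_lbimproved} breaks down at $n=3$: in $C_9$ one has $3e_2 = -6e_2$, so the sequence $T_2$ ceases to be squarefree and a fresh construction is required. The new idea is to exploit the subgroup structure of $C_9$ by working with the set of generators of $\langle e_2\rangle$.

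Let $G = \langle e_1 \rangle \oplus \langle e_2\rangle$ with $\ord(e_1)=3$ and $\ord(e_2)=9$, and set $U = \langle e_2\rangle \setminus 3\langle e_2\rangle = \{e_2, 2e_2, 4e_2, 5e_2, 7e_2, 8e_2\}$, the six elements of order $9$ in $\langle e_2\rangle$. I would take
\[
S \;=\; \Bigl(\prod_{u \in U} u\Bigr)\Bigl(\prod_{u \in U}(e_1 + u)\Bigr),
\]
which is squarefree of length $12$ since its two factors lie in distinct cosets of $\langle e_2\rangle$. A direct computation yields $\sigma(U) = 27e_2 = 0$ and therefore $\sigma(S) = 6e_1 + 2\sigma(U) = 0$.

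To show that $S$ has no zero-sum subsequence of length $9$, I would invoke Lemma \ref{complement} with $k=12$ and $r=3$: it suffices to verify that $S$ has no $3$-term subsequence with sum $\sigma(S)=0$. For any such $3$-term subsequence, let $k$ be the number of its terms drawn from the coset $e_1 + U$; then the $\langle e_1\rangle$-component of the sum equals $ke_1$, which vanishes only for $k\in\{0,3\}$. Both cases reduce to the single assertion $0 \notin \Sigma_3(U)$. To establish this, reduce modulo $3\langle e_2\rangle$: each element of $U$ has residue $\pm e_2$, so a zero-sum triple in $U$ must have all three residues equal. The only such candidates are $\{e_2, 4e_2, 7e_2\}$, with sum $12 e_2 = 3e_2$, and $\{2e_2, 5e_2, 8e_2\}$, with sum $15 e_2 = 6e_2$; neither vanishes in $C_9$.

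The substantive step is the choice of $U$: because all its elements have order $9$, their residues modulo $3\langle e_2\rangle$ are confined to $\{\pm e_2\}$, which severely restricts the possible $3$-term zero-sums and reduces the matter to two explicit triples. Everything else is routine.
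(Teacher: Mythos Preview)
Your proof is correct. In fact your sequence $S$ is precisely the translate $e_2 + T$ of the paper's example $T = R\,(e_1+R)\,(e_2+R)\,(e_1+e_2+R)$ with $R = 0(3e_2)(6e_2)$: both are unions of four cosets of the subgroup $H=\{0,3e_2,6e_2\}$, and since the Harborth condition is translation-invariant the two examples are equivalent. The verifications differ in presentation: you split $S$ as two blocks of six along the $e_1$-coordinate and reduce to $0\notin\Sigma_3(U)$ via the residue map $C_9 \to C_9/3C_9$, checking two candidate triples; the paper instead projects all of $T$ to $G/H \cong C_3^2$, lists the four zero-sum triples there, and checks that none lift to a subsequence of sum $\sigma(T)=6e_2$. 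Your route is slightly leaner because the $e_1$-argument immediately halves the casework; the paper's route makes the underlying $C_3^2$-structure of the example more visible. Either way the substance is the same quotient-then-check argument applied to the same (up to translation) extremal set.
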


\begin{proof}
To prove this lemma, as $\exp(G)=9$, it suffices to give an example of a squarefree sequence $T$ of length $12$ over $G$ that does not admit any zero-sum subsequence $T_{1}$ of length $9$.
Let $G=\langle e_{1} \rangle \oplus \langle e_{2}\rangle $ with $\ord(e_{1})=3$, and $\ord(e_{2})=9$. 

Let us consider the following sequence: 
\[
T=R (e_{1}+R) (e_{2}+R) (e_{1}+e_{2}+R), \text{ with } R=0 (3e_{2}) (6e_{2}),
\]
This is a squarefree sequence of length $12$ that satisfies $\sigma(T)=0+0+3e_{2}+3e_{2}=6e_{2}$.  By Lemma \ref{complement} with $k=12$ and $r=9$, the sequence $T$ has a zero-sum subsequence of length $9$ if and only if $T$ has a subsequence $T_2$ with $|T_2|= 3 = 12-9$ and $\sigma(T)=\sigma(T_2)=6e_2$.
For a contradiction let us assume such a subsequence $T_2$ exists.  Let $H=\{0,3e_{2},6e_{2}\}$ and let $\pi : G \to G/H$ be the standard epimorphism. 
One has $G/H \cong C_{3}\oplus C_{3}$, and this group is generated by $f_{1}=\pi(e_{1})$ and $f_{2}=\pi{(e_{2})}$. 

Since $\sigma(T_2)=6e_2$, one has that $\pi(T_2)$ is a zero-sum subsequence of $\pi(T)$ and $\pi(\sigma(T))=\pi(6e_{2})=0$.

But, note that the only subsequences of  $\pi(T)=0^{3} {f_{1}}^{3} {f_{2}}^{3} {(f_{1}+f_{2})}^{3}$ of length $3 $ which have sum zero are $0^{3}$, ${f_{1}}^{3}$, ${f_{2}}^{3}$ and  ${(f_{1}+f_{2})}^{3}$. 
It remains to check if any of the corresponding subsequences of $T$ has sum $6e_2$. This is not the case. Concretely, we have $\sigma(0^{3})=0 $,
$\sigma(e_{1}^{3})=0 $,
$\sigma(e_{2}^{3})=3 e_{2} $, and  $\sigma((e_{1}+e_{2})^{3})=3e_{2} $.
Thus, the sequence $T$ does not have any subsequence of length $3$ with sum $6e_2$. This establishes the claimed bound. 
\end{proof}

\subsection{Establishing the existence of zero-sum subsequence of length $\exp(G)$ under various assumptions}

Let us fix some notation that will be used throughout the subsection. Let $G=C_3\oplus C_{3p}$ with $p \ge 5$ a prime number. We note that $G = H_1 \oplus H_2$ where $H_1\cong C_3^2$ is the subgroup of elements of order dividing $3$ and $H_2 \cong C_p$ is the subgroup of elements of order dividing $p$. 

For $i \in \{1,2\}$, let  
\[\pi_i \colon 
\begin{cases} G & \to H_i \\
g=h_1 +h_2 & \mapsto h_i \end{cases}\] 
where $h_i\in H_i$ is the unique element such that $g = h_1 +h_2$. That is, $\pi_i$ denotes the projection on the subgroup $H_i$. 

For a sequence $S$ over $G$ there exists a unique decomposition $S = \prod_{h \in H_1}S_h$ where $S_h$ is the subsequence of elements of $S$ with $\pi_1(g)=h$.
If $S$ is squarefree then for each $h \in H_1$ the sequence $\pi_2 (S_h)$ is a squarefree sequence over $H_2$.

To establish the bound $\g(G)\le 3p+3$ we need to show that every squarefree sequence of length $3p + 3$ over $G$ has a zero-sum subsequence of length $3p$. 
By Lemma \ref{complement} this is equivalent to establishing that every squarefree sequence of length $3p + 3$ over $G$ has a subsequence $R$  of length $3$ with the same sum as $S$.

To obtain such a sequence of length $3$ we typically first restrict our considerations  to subsequence for which $\pi_1(\sigma(S))=  \pi_1(\sigma (R))$; this condition can be established via explicit arguments, as the group $H_1$ is fixed and small. 
Then, using tools from Additive Combinatorics recalled in Section \ref{sec_prel}, we show that among the sequences with $\pi_1(\sigma(S))=  \pi_1(\sigma (R))$ there is one for which we also have $\pi_2(\sigma(S))=  \pi_2(\sigma (R))$ and thus satisfy $\sigma(S)= \sigma(R)$ as needed.

We formulate a technical lemma that is a key-tool in our argument. Note that for the proof of this lemma it is crucial that $p$ is prime.

\begin{lemma}
\label{lemma_main} 
Let $S$ be a sequence of length $3p+3$ over $G$. Let $S= \prod_{h \in H_1}S_h$ where $S_h$ is the subsequence of elements of $S$ with $\pi_1(g)=h$.
\begin{enumerate}
\item If there exist  distinct $x,y,z \in H_1$ with $x+y + z =  \pi_1 (\sigma(S))$ such that $S_x,  S_y, S_z$ are all non-empty and $|S_x| + |S_y| + |S_z| -2 \ge p$, then $S$ has a zero-sum subsequence of length $3p$.
\item If there exist distinct $x,y \in H_1$  with $2x + y =  \pi_1 (\sigma(S))$ such that $|S_x| \ge 2$  and $|S_y|\ge 1$ and $2|S_x| + |S_y|-4  \ge p$, then $S$ has a zero-sum subsequence of length $3p$.
\item If there exist  $x \in H_1$ with $3x  =  \pi_1 (\sigma(S))$ such that $|S_x| \ge 3$  and
$3|S_x|  - 8\ge p $, then $S$ has a zero-sum subsequence of length $3p$.
\end{enumerate}
\end{lemma}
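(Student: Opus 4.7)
The plan is to apply Lemma \ref{complement} to reduce the problem to finding a subsequence $R$ of $S$ with $|R|=3$ and $\sigma(R)=\sigma(S)$. Equivalently, one must find $R$ such that $\pi_1(\sigma(R)) = \pi_1(\sigma(S))$ and $\pi_2(\sigma(R)) = \pi_2(\sigma(S))$. In each of the three cases, the hypotheses dictate how to choose the $H_1$-parts of the three terms of $R$ so that the first condition is automatic; the problem then reduces to showing that the second condition can be met by exploiting the size estimates through the addition theorems from Section \ref{sec_prel}.

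For part (1), pick one element from each of $S_x, S_y, S_z$; then $\pi_1$ of the sum is $x+y+z = \pi_1(\sigma(S))$ automatically. Let $A=\pi_2(\supp(S_x))$, $B=\pi_2(\supp(S_y))$, $C=\pi_2(\supp(S_z))$. Since $S$ is squarefree and $\pi_1$ is constant on each $S_h$, the map $\pi_2$ is injective on each $\supp(S_h)$, so $|A|=|S_x|$, $|B|=|S_y|$, $|C|=|S_z|$. By the iterated form of Cauchy--Davenport (Theorem \ref{CDthm}),
\[
|A+B+C| \ge \min\{p,\, |A|+|B|+|C|-2\} = \min\{p,\, |S_x|+|S_y|+|S_z|-2\} = p,
\]
so $A+B+C = H_2$ and in particular $\pi_2(\sigma(S)) \in A+B+C$, yielding the desired $R$.

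For part (2), choose two distinct elements of $S_x$ and one of $S_y$; the $\pi_1$-sum is $2x+y=\pi_1(\sigma(S))$. With $A=\pi_2(\supp(S_x))$ and $B=\pi_2(\supp(S_y))$, the required $\pi_2$-condition becomes $\pi_2(\sigma(S))\in \Sigma_2(A) + B$. By Theorem \ref{thmDH}, $|\Sigma_2(A)| \ge \min\{p,\, 2|A|-3\}$, and then Theorem \ref{CDthm} gives
\[
|\Sigma_2(A)+B|\ge \min\{p,\, 2|A|+|B|-4\}=\min\{p,\, 2|S_x|+|S_y|-4\}=p,
\]
so $\Sigma_2(A)+B = H_2$. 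Part (3) is analogous: choose three distinct elements of $S_x$, whose $\pi_1$-sum is $3x=\pi_1(\sigma(S))$, and observe that $|\Sigma_3(A)| \ge \min\{p,\, 3|A|-8\} = p$ by Theorem \ref{thmDH} under the given hypothesis, so $\Sigma_3(A)=H_2$ contains $\pi_2(\sigma(S))$.

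There is no serious obstacle in this lemma; the only subtlety worth flagging is the systematic use of the fact that squarefreeness of $S$ combined with $\pi_1$ being constant on $S_h$ forces $\pi_2$ to be injective on $\supp(S_h)$, so that the sequence-sizes $|S_h|$ translate directly into the set-sizes needed to feed Cauchy--Davenport and Dias da Silva--Hamidoune. The role of $p$ being prime is precisely to allow the application of these two addition theorems to the projected sets in $H_2 \cong C_p$.
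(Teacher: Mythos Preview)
Your proof is correct and follows essentially the same approach as the paper: reduce via Lemma \ref{complement} to finding a length-$3$ subsequence with the same sum, fix the $H_1$-components according to the hypotheses so that the $\pi_1$-condition is automatic, and then apply Cauchy--Davenport (part 1), Dias da Silva--Hamidoune plus Cauchy--Davenport (part 2), and Dias da Silva--Hamidoune (part 3) to the projected sets in $H_2\cong C_p$ to force the $\pi_2$-condition. Your explicit remark that squarefreeness of $S$ makes $\pi_2$ injective on each $\supp(S_h)$ (so $|\pi_2(\supp(S_h))|=|S_h|$) is exactly the observation the paper uses as well.
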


We use and combine the theorems of Cauchy--Davenport and Dias da Silva--Hamidoune. 

\begin{proof}
In each case we show that under the assumptions of the lemma, $S$ has a subsequence $R$ of length $3$ with the same sum. By Lemma \ref{complement} with $k=3p+3$ and $r=3$ this establishes our claim.
 
\noindent 
(1). Let $x,y,z \in H_1$ be distinct with $x+y + z =  \pi_1 (\sigma(S))$. 
If $g_x$ divides $S_x$ and $g_y$ divides $S_y$ and $g_z$ divides $S_z$, then $g_xg_yg_z$ is a subsequence of $S$ and $\pi_1 (\sigma(g_xg_yg_z)) = x+y+z =  \pi_1 (\sigma(S))$.  

Thus, to show that $S$ has a subsequence $R$ of length $3$ it suffices to show that there exist elements $g_x$, $g_y$ and $g_z$ such that   
$g_x$ divides $ S_x$ and $g_y $ divides $ S_y$ and  $g_z $ divides $ S_z$ with $\pi_2 (\sigma(g_xg_yg_z)) =  \pi_2 (\sigma(S))$. 

Let $\Omega$ denote the set of all sequence $g_xg_yg_z$ with $g_x\mid S_x$, $g_y \mid S_y$, $g_z \mid S_z$. 
We note that 
\[\bigl\{ \pi_2 (\sigma(R)) \colon  R \in \Omega \bigr\} = \supp ( \pi_2 (S_x)) + \supp ( \pi_2 (S_y)) + \supp  (\pi_2 (S_z)).
\]

From Theorem \ref{CDthm}, the Theorem of Cauchy--Davenport,  
\begin{align*}
&   |\supp ( \pi_2 (S_x)) +  \supp ( \pi_2 (S_y)) + \supp  (\pi_2 (S_z))|   \ge \\ 
&  \min \bigl\{p , |\supp ( \pi_2 (S_x))| + |\supp ( \pi_2 (S_y))| + |\supp  (\pi_2 (S_z))|- 2 \bigr\} 
\end{align*}
As the sequence $S$ is squarefree, for each $h \in H_1$, the sequence $\pi_2(S_h)$ is squarefree as well.  Consequently, $|\supp ( \pi_2 (S_h))|= |S_h|$. 
Thus, if $|S_x| + |S_y| + |S_z| -2 \ge p$, then $\supp(\pi_2(S_x)) + \supp(\pi_2(S_y)) + \supp(\pi_2(S_z))$ must be equal to the full group $H_2$.
In particular, there exits a sequence $R \in \Omega$ with $\pi_2(\sigma(R) ) = \pi_2 (\sigma(S))$, and the proof is complete.

\medskip
\noindent
(2). Let $x,y \in H_1$ be distinct with $2x + y  =  \pi_1 (\sigma(S))$. 
If $g_xg'_x\mid S_x$ and  $g_y \mid S_y$, 
then $g_xg'_xg_y$ is a subsequence of $S$ and $\pi_1 (\sigma(g_xg_x'g_y)) = 2x+y =  \pi_1 (\sigma(S))$.  

Let $\Omega$ denote the set of all sequence $g_xg_x'g_y$ with $g_xg_x'$ divides $S_x$ and  $g_y$ and $S_y$. 
We note that $\{ \pi_2 (\sigma(R)) \colon  R \in \Omega \} = \Sigma_2( \supp(\pi_2(S_x))) + \supp(\pi_2(S_y))$. 

By the Theorems of Dias da Silva--Hamidoune and Cauchy--Davenport (see Theorems \ref{thmDH} et \ref{CDthm}) we get that, as $p$ is assumed to be prime, 
\[\bigl|\Sigma_2 \bigl(\supp(\pi_2(S_x))\bigr) + \supp\bigl(\pi_2(S_y)\bigr) \bigr| \ge \min \bigl\{p, 2   |\supp ( \pi_2 (S_x))| + |\supp ( \pi_2 (S_y))| - 4 \bigr\}.\]
As in (1), if $2|S_x| + |S_y| -  4 \ge p$, then there exists  some $R \in \Omega$ with $\pi_2(\sigma(R) ) = \pi_2 (\sigma(S))$, and the proof is complete.

\medskip
\noindent
(3). Let $x \in H_1$ be with $3x = \pi_1 (\sigma(S))$. 
Let $g_xg'_xg''_x$ be a subsequence of $S_x$ of length $3$. Then $g_xg'_xg_x''$ is a subsequence of $S$ with $\pi_1 (\sigma(g_xg_x'g_x'')) = 3x = \pi_1 (\sigma(S))$.  

Let $\Omega$ denote the set of all subsequence $g_xg_x'g_x''$ of $S_x$. 
We note that 
\[
\bigl\{ \pi_2 (\sigma(R)) \colon  R \in \Omega \bigr\} = \Sigma_3 \bigl( \supp(\pi_2(S_x)) \bigr).
\] 

Similarly, by Theorem \ref{CDthm}, the Theorem of Dias da Silva--Hamidoune, we get: 
\[
\bigl|\Sigma_3( \supp(\pi_2(S_x)))\bigr| \ge \min \bigl\{p, 3   |\supp ( \pi_2 (S_x))|  - 8 \bigr\}.
\]
As in (1), if $3|S_x| -  8 \ge p$, then there exists  some $R \in \Omega$ with $\pi_2(\sigma(R) ) = \pi_2 (\sigma(S))$, and the proof is complete.
\end{proof}

In the present context there exists essentially two types of sequences $S$ over $G$ of length $3p+3$: those for which $\pi_1(\sigma(S))$ equals zero and those for which it is non-zero.
It is clear that this property is preserved under automorphisms of the group, and when the length of the sequence is a multiple of $3$ it is also preserved under translations.  
We treat these two types of sequences separately. The latter type is treated in Proposition \ref{prop_nonzero}. For the former type,  two cases are distinguished: the case where  
the support of $\pi_1 (S)$ is the full group $H_1$ (see Proposition \ref{prop_zero_full}) and the cases where it is not (see Proposition \ref{prop_zero}).

\begin{proposition}
\label{prop_zero_full}
Let $S$ be a squarefree sequence over $G$ of length $3p+3$. If $\sigma ( \pi_1 (S)) = 0$ and $\supp (\pi_1 (S))=H_1$, then $S$ has a zero-sum subsequence of length $3p$.
\end{proposition}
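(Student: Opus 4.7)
My strategy is to invoke Lemma~\ref{complement} with $k=3p+3$ and $r=3$, reducing the task to exhibiting a length-$3$ subsequence $R \mid S$ with $\sigma(R)=\sigma(S)$. Since $\pi_1$ is a homomorphism, the hypothesis $\sigma(\pi_1(S))=0$ forces $\pi_1(\sigma(R))=0$. Writing $R=g_1g_2g_3$ and $h_i := \pi_1(g_i) \in H_1 \cong C_3\oplus C_3$, I need $h_1+h_2+h_3=0$. Because every element of $H_1$ has order dividing~$3$, the only admissible configurations are: (A) the $h_i$ are three distinct points on an affine line of $H_1$; or (C) $h_1=h_2=h_3$. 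The mixed case ``two equal, one different'' is excluded because $2h+h'=0$ forces $h'=-2h=h$ in $H_1$.

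Case (A) is handled via Lemma~\ref{lemma_main}(1), which applies as soon as some line $\{x,y,z\}\subset H_1$ satisfies $|S_x|+|S_y|+|S_z|\ge p+2$; case (C) would be handled via Lemma~\ref{lemma_main}(3) provided some $|S_h|\ge (p+8)/3$. To produce a good line, I will double-count over the twelve affine lines of $H_1$: each point of $H_1$ lies on exactly $4$ lines, hence
\[
\sum_{L}\sum_{h\in L}|S_h| \;=\; 4\sum_{h\in H_1}|S_h| \;=\; 4(3p+3) \;=\; 12(p+1),
\]
where the outer sum runs over the $12$ affine lines of $H_1$, so the average line sum is exactly $p+1$. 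Consequently, if any line exceeds the average, its sum is at least $p+2$ and Lemma~\ref{lemma_main}(1) immediately finishes the proof.

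The remaining, delicate case is when every line has sum exactly $p+1$. Writing $f(h):=|S_h|$, these twelve constraints form a linear system whose point-line incidence matrix on $\mathbb{F}_3^2$ has full column rank (the only $f\colon H_1\to\mathbb{R}$ whose sum over every line vanishes is $f\equiv 0$, as can be checked by a short direct argument or via character theory on $C_3 \oplus C_3$). Thus $f$ is constant, and $|S_h|=(p+1)/3$ for all $h$, which in particular requires $p\equiv 2\pmod 3$. This extremal configuration is the main obstacle: Lemma~\ref{lemma_main}(1) misses its hypothesis by one and Lemma~\ref{lemma_main}(3) by seven. To finish, I will exploit that Cauchy--Davenport must then be tight for every line, so that for each line $\{x,y,z\}$ the sumset $\supp(\pi_2(S_x))+\supp(\pi_2(S_y))+\supp(\pi_2(S_z))$ has size exactly $p-1$ and misses precisely $\pi_2(\sigma(S))$. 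Applying Theorem~\ref{vosper} first to $A_x+A_y$ (with $A_h:=\supp(\pi_2(S_h))$) and then to the triple sum $(A_x+A_y)+A_z$, together with Lemma~\ref{lem_diff_ap}, I expect to force each $A_h$ to be an arithmetic progression in $H_2\cong C_p$ with a common difference $d$ shared across all nine supports. Substituting this very rigid structure into the identity $\sigma(S)=\sum_h\sigma(h+A_h)$ and comparing with the explicit formula for the missing element of each line sumset should yield a contradiction, completing the proof.
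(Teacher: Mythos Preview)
Your approach tracks the paper's closely through the reduction to the extremal case: both reduce via Lemma~\ref{complement} to finding a $3$-term subsequence with sum $\sigma(S)$; both use line-averaging (yours via a clean double count over the twelve affine lines of $H_1$, the paper via an ad-hoc extremal argument) to force $|S_h|=(p+1)/3$ for every $h\in H_1$; and both then invoke Vosper together with Lemma~\ref{lem_diff_ap} to make every $A_h:=\supp(\pi_2(S_h))$ an arithmetic progression with one common difference $d$.

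The gap is your final step, and it cannot be repaired along the lines you sketch. Write $A_h=\{a_h+jd:0\le j\le m-1\}$ with $m=(p+1)/3$. Each line-sumset $A_x+A_y+A_z$ is then the progression of length $3m-2=p-1$ starting at $a_x+a_y+a_z$, so it omits exactly $a_x+a_y+a_z-d$; requiring this to equal $\pi_2(\sigma(S))$ for every line and applying your own Radon-transform observation forces all $a_h$ equal to some common value $a$ with $3a-d=\pi_2(\sigma(S))$. But the identity you propose to exploit, $\pi_2(\sigma(S))=\sum_{h}\sigma(A_h)=9\bigl(ma+\tfrac{m(m-1)}{2}d\bigr)$, reduces modulo $p$ (using $3m\equiv 1$ and $9m(m-1)/2\equiv -1$) to $3a-d$ again: the equation is a tautology, not a contradiction. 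Worse, the resulting configuration genuinely exists and violates the proposition. After translating so that $\sigma(S)=0$ one has $a=3^{-1}d$; for $p=5$ this gives $A_h=\{2d,3d\}$ for all $h$, and $S=\prod_{h\in H_1}(h+2d)(h+3d)$ is a squarefree sequence of length $18$ over $C_3\oplus C_{15}$ in which no three elements sum to $0$ (their $H_2$-components lie in $\{2d,3d\}$, and any three of those sum to a nonzero element of $C_5$), hence $S$ has no zero-sum subsequence of length $15$. For $p=11$ the analogous choice $A_h=\{4d,5d,6d,7d\}$ works the same way. The paper's own proof breaks at the corresponding spot: its deduction that $0\notin\Sigma_3(A_h)$ forces $s_h\in\{0,1\}$ overlooks that the integer interval $[3s_h+3,\,3s_h+p-5]$ can lie entirely within $(p,2p)$ and still avoid every multiple of $p$.
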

\begin{proof}
To simplify the subsequent considerations, we note that we can assume that $\sigma(\pi_2(S))=0$ (so effectively $\sigma(S)=0$). 
Indeed, it suffices to note that  if, for any $h \in G$, the shifted sequence $h + S$ contains a zero-sum subsequence of length $3p$, then the sequence $S$ contains a zero-sum subsequence of length $3p$. There is some $h' \in H_2$ such that $(3p+3)h' = -\sigma(\pi_2(S))$; note that as $p$ and $3p+3$ are co-prime, the multiplication $h \mapsto (3p+3)h$ is an isomorphism on $H_2$.  Now, one can consider $h'+S$ instead of $S$ provided the additional condition $\sigma(\pi_{1}(S))=0$ is not altered. Since $\sigma ( \pi_1 (h'+S)) = |S|h'+\sigma(\pi_{1}(S))$ and since $|S|h'=(3p+3)h'=0$, this is indeed true and $\supp (\pi_1 (h'+S))=H_1$ still holds.   

Let $H_1'$ be a cyclic subgroup of $H_1$ and let $g \in H_1$, and $\{x, y, z\}=g+H_1'$ a co-set. Since $x+y+z= 0 = \sigma (\pi_1 (S))$, it follows that if $|S_xS_yS_z| -2 \ge p$, then from part (1) of Lemma \ref{lemma_main}, the result holds.

It remains to consider the case where for each co-set of $H_1$ of cardinality three,  denoted $\{x, y, z\}$, one has $|S_xS_yS_z| \leq p+1$. 
We note that this is only possible if for every co-set $\{x,y,z\}$, one has $|S_xS_yS_z| =  p+1$. Indeed,  $H_1$ can be partitioned as the disjoint union of three such co-sets, 
say, $H_1 = \{x_1,y_1,z_1\} \cup \{x_2,y_2,z_2\} \cup \{x_3,y_3,z_3\}$. Then, on the one hand 
\[|S_{x_1}S_{y_1}S_{z_1}| \leq p+1,  \quad |S_{x_2}S_{y_2}S_{z_2}| \leq p+1, \quad | S_{x_3}S_{y_3}S_{z_3}| \leq p+1,\] yet on the other hand $|S_{x_1}S_{y_1}S_{z_1}| + |S_{x_2}S_{y_2}S_{z_2}| +| S_{x_3}S_{y_3}S_{z_3}| = |S|= 3p+3$. Therefore it is necessary that for each $i \in [1,3]$ one has$|S_{x_i}S_{y_i}S_{z_i}| =p+1$.  

Next we assert that this is only possible if each of the $9$ sequences has the same length. 
Let $H_1 = \{ x_1', x_2', \cdots , x_9' \}$ such that $|S_{x_1'}| \geq |S_{x_2'}| \geq \cdots \geq |S_{x_9'}|$. Let $v_i = |S_{x_i'}|$. Let $j\in [1,9]$ such that $\{x_1', x_2', x_j'\}$ is a co-set and let $i\in [1,9]$ such that $\{x_8', x_9', x_i'\}$ is a co-set.   
Then one has $v_1 + v_2 + v_j =p+1$  and $v_8 +v _9 + v_i=p+1$. 

It follows that $ (v_1-v_9) + (v_2-v_8) + (v_j-v_i)=0$. 
Hence $ (v_1-v_9) +(v_2-v_8)=v_i-v_j$. Yet, as $(v_1-v_9) \geq (v_i-v_j)$ we get  $v_2 -v_8=0$; notice that $ v_2-v_8 \geq 0$, if $v_2-v_8 >0$, hence $(v_1-v_9)+ (v_2-v_8) \geq (v_i-v_j) + (v_2-v_8) > v_i-v_j$, a  contradiction.
Consequently, one has  $v_2 = v_3 = \dots  = v_8 = v$, and this common value is $\frac{p+1}{3}$. Note that if this is not an integer, the proof is complete. So it can be assumed that $\frac{p+1}{3}$ is an integer. It remains to show that $v_1 = v$ and $v_9 = v$. There exists a co-set of $H_1$ of cardinality $3$ that contains $x_9'$  and that does not contain $x_1'$, so $v_9  + 2v = p+1$ and thus  $v_9= \frac{p+1}{3}$. In the same way we get that $v_1 = v$.

We now reconsider, for a co-set $\{x, y, z\}$, the cardinality of the set   $ \supp( \pi_2 (S_x)) + \supp (\pi_2 (S_y)) + \supp (\pi_2 (S_z))$.
By Theorem \ref{CDthm}, the Theorem of Cauchy--Davenport, one has the lower bound  $\min \{p,|\supp( \pi_2 (S_x))| + | \supp(\pi_2 (S_y))| + |\supp(\pi_2 (S_z))| -2 \} = \min\{p, |S_x| + | S_y| +|S_z| -2\}= p-1$. If one has $| \supp (\pi_2 (S_x)) + \supp( \pi_2 (S_y)) + \supp (\pi_2 (S_z))| \ge p$, then  $\supp (\pi_2 (S_x)) + \supp (\pi_2 (S_y)) + \supp (\pi_2 (S_z)) = H_2$, and we can conclude as previously.

Thus, it remains to consider the case that $|\supp( \pi_2 (S_x)) + \supp (\pi_2 (S_y)) + \supp (\pi_2 (S_z))|=p-1$.

Now, this is only possible when
\begin{align*}
& | \supp (\pi_2 (S_x)) + \supp (\pi_2 (S_y)) + \supp (\pi_2 (S_z))| = \\ 
& |\supp (\pi_2 (S_x)) + \supp( \pi_2 (S_y))| + | \supp (\pi_2 (S_z))| -1 =  \\ 
& |\supp(\pi_2 (S_x))| + |\supp (\pi_2 (S_y))|-1 + |\supp (\pi_2 (S_z))| -1.
\end{align*}

Thus, $|\supp (\pi_2 (S_x)) + \supp (\pi_2 (S_y))| = |\supp (\pi_2 (S_x))| + |\supp(\pi_2 (S_y))|-1$. 
In the same way, it follows that $|\supp (\pi_2 (S_x)) + \supp (\pi_2 (S_z))| = |\supp (\pi_2 (S_x))| + |\supp(\pi_2 (S_z))|-1$, and  that 
$|\supp (\pi_2 (S_y)) + \supp (\pi_2 (S_z))| = |\supp (\pi_2 (S_y))| + |\supp(\pi_2 (S_z))|-1$.
 
The Theorem of Vosper, see Theorem \ref{vosper}, yields that $\pi_2 (S_x),\pi_2 (S_y)$  are arithmetic progressions with the same common difference, 
$\pi_2 (S_x),\pi_2 (S_z)$  are arithmetic progressions with the same common difference, and $\pi_2 (S_y),\pi_2 (S_z)$  are arithmetic progressions with the same common difference. 
From Lemma  \ref{lem_diff_ap}, it follows that there is a common difference for all three  $\pi_2(S_x), \pi_2 (S_y),\pi_2 (S_z)$.
Indeed, one has this for any pair of the $9$ sets, as the argument can be applied for any co-set. Thus, all $9$ sets are arithmetic progressions with a common difference. 
Let us denote this difference by $e$; of course, this is a generating element of $H_1$. 

If there is some $h \in H_1$ such that $\pi_2 (S_h)$ has a zero-sum subsequence of length $3$, then in fact $S_h$ has a zero-sum sequence of length $3$. Since we assumed at the start that  $\sigma(S)=0$, invoking Lemma \ref{complement} our claim is complete.  

Thus, we assume that for no $h\in H_1$ the sequence $\pi_2 (S_h)$ has a zero-sum subsequence of length $3$. In particular $\pi_2 (S_h)$ does not have $(-e)e0$, as a subsequence. Thus, for each $h \in H_1$ one has
$\pi_2 (S_h) = \prod_{j=s_h}^{s_h+{\frac{p-2}{3}}}(j e)$ with $0\le |S_h| \le |S_h| + \frac{p-2}{3} < p-1$.
It is easy to see that: 
\[\Sigma_3 (\pi_2 (S_h)) = \bigl\{ j e \colon j \in [3 |S_h|+3,3|S_h| - 3 + (p-2)] \bigr\}.\]

For this set not to contain $0$, we need $3|S_h| - 3 + (p-2)< p$. So $3 |S_h|  < 5$, that is $|S_h| \in \{0,1\}$.

If there is a co-set $\{x,y,z\}$ such that $|S_x| = |S_y| = |S_z| = 0$, then clearly $\pi_2 (S_x) + \pi_2 (S_y) + \pi_2 (S_z)$ contains $0$.
Yet if there is no co-set $\{x,y,z\}$ such that $|S_x| = |S_y| = |S_z| = 0$, then there is a co-set $\{x',y',z'\}$ such that $|S_{x'}| + |S_{y'}| + |S_{z'}|  \ge 2$;
indeed, it suffices to note that by the former condition there must be at least two elements $h,h' \in H_1$ with $|S_h| \ge 1$ and $|S_{h'}| \ge 1$. However, this gives that $\pi_2 (S_{x'}) + \pi_2 (S_{y'}) + \pi_2 (S_{z'})$ will contain $(2 \cdot \frac{p+1}{3} + \frac{p-2}{3})e = p e = 0$. 
Thus, the argument is complete.
\end{proof}

For the next result, we  keep the condition that $\sigma ( \pi_1 (S)) = 0$, yet consider the case $\supp (\pi_1 (S))\neq H_1$ instead. 

\begin{proposition}
\label{prop_zero}
Let $S$ be a squarefree sequence over $G$ of length $3p+3$. If $\sigma ( \pi_1 (S)) = 0$ and $\supp (\pi_1 (S))\neq H_1$, then $S$ has a zero-sum subsequence of length $3p$.
\end{proposition}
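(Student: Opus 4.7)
The plan is to apply Lemma \ref{lemma_main}. Since $\pi_1(\sigma(S)) = 0$ and every element of $H_1 \cong C_3^2$ satisfies $2x = -x$, the equation $2x + y = 0$ with $x \neq y$ has no solution in $H_1$, so part (2) of that lemma is vacuous here and I will only use parts (1) and (3). Write $T = \supp(\pi_1(S))$ and $v_h = |S_h|$ for $h \in H_1$. Since $v_h \le p$ (as $S$ is squarefree and $|H_2| = p$) and $\sum_{h \in T} v_h = 3p+3$, one has $|T| \ge 4$, while by hypothesis $T \subsetneq H_1$; hence $4 \le |T| \le 8$.

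If $|T| = 4$, pick $h^\ast \in T$ with $v_{h^\ast} \ge \lceil (3p+3)/4 \rceil$. A routine check shows this is $\ge \max(3,(p+8)/3)$ for $p \ge 5$, so Lemma \ref{lemma_main}(3) (with $3h^\ast = 0 = \pi_1(\sigma(S))$ automatic in $H_1$) closes this case.

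If $|T| \ge 5$, I aim to exhibit a coset $\{x,y,z\}$ of an order-$3$ subgroup of $H_1$ (equivalently, a $3$-subset summing to $0$, henceforth a \emph{line}) contained in $T$ with $v_x + v_y + v_z \ge p+2$, and then invoke Lemma \ref{lemma_main}(1). That some line is contained in $T$ is guaranteed by the classical fact that the maximum size of a cap in $C_3 \oplus C_3$ is $4$. To locate one with large weight, I double count: let $N$ be the number of lines in $T$ and, for $h \in T$, let $d(h)$ be the number of lines in $T$ through $h$. Then
\[
\sum_{L \subset T} v_L \;=\; \sum_{h \in T} d(h)\, v_h,
\]
so if every line in $T$ satisfied $v_L \le p+1$ we would get $\sum_{h \in T} d(h)\, v_h \le N(p+1)$, an inequality I will contradict.

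The main obstacle is the case split according to $|T^c|$, in which one computes $N$ and $d(h)$ explicitly. For $|T|=8$, $T^c = \{h_0\}$ gives $d \equiv 3$ and $N = 8$, so $3(3p+3) \le 8(p+1)$: contradiction. For $|T|=7$ with $T^c = \{u,v\}$, the element $-u-v$ lies in $T$ (otherwise $u=v$) and is the unique element on $3$ lines of $T$ (the other six on $2$), with $N=5$, yielding $v_{-u-v} + 6p+6 \le 5p+5$: contradiction. For $|T|=6$ I subdivide further: if $T^c$ is a line, the two remaining cosets of the relevant subgroup partition $T$, so $N = 2$ and $d \equiv 1$, and $3p+3 \le 2p+2$ fails; if $T^c$ is not a line, the three sums $-u_i - u_j$ (with $\{u_1,u_2,u_3\} = T^c$) are distinct, lie in $T$, and are exactly the three heavy elements with $d = 2$ (the other three of $T$ have $d=1$, $N=3$), giving $(\text{heavy sum}) + 3p+3 \le 3p+3$, hence $(\text{heavy sum}) \le 0$, contradicting that the heavy elements lie in $T$. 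For $|T|=5$ a cleaner direct argument works: any line $L \subset T$ has $|T \setminus L| = 2$, so $v_{T \setminus L} \le 2p$ and thus $v_L \ge p+3 > p+2$. In every subcase Lemma \ref{lemma_main}(1) then produces the desired zero-sum subsequence of length $3p$.
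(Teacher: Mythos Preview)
Your proof is correct and takes a genuinely different route from the paper's. The paper first translates so that $0 \notin \supp(\pi_1(S))$ and then, for each value of $|\supp(\pi_1(S))|$ from $8$ down to $\le 5$, finds by ad hoc averaging arguments a coset of weight at least $p+2$ among those cosets avoiding $0$; for $|\supp(\pi_1(S))|\le 5$ it switches to part~(3) of Lemma~\ref{lemma_main} and has to treat $p=5$ separately. You instead keep $S$ as is, observe that part~(2) is vacuous when $\pi_1(\sigma(S))=0$, and run a uniform double-counting argument over the lines (cosets) contained in $T=\supp(\pi_1(S))$ via the incidence identity $\sum_{L\subset T} v_L = \sum_{h\in T} d(h)v_h$, computing $N$ and the degrees $d(h)$ explicitly in each configuration of $T^c$. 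This is more systematic and avoids the translation step entirely; it also sidesteps the paper's special handling of $p=5$, since your split between $|T|=4$ (handled by part~(3)) and $|T|=5$ (handled directly by part~(1) via $v_L \ge 3p+3 - 2p$) is clean for all $p\ge 5$. The paper's approach, on the other hand, is a bit more hands-on and does not rely on knowing the line/incidence structure of $\mathrm{AG}(2,3)$ beyond the value $\g(C_3^2)=5$.
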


\begin{proof}
Let $h \in H_1$ such that $|S_h|=0$; such an element exists by assumption. 
Now, as recalled in Section \ref{sec_prel}, the sequence $S$ contains a zero-sum subsequence $T$ of length $3p$ then the sequence $S-h$ contains $T-h$ as a zero-sum subsequence of length $3p$.   Since $\supp (\pi_1 (-h+S))= -h + \supp (\pi_1 (S))$ it follows from $h \notin \supp (\pi_1 (S))$  that $0 \notin \supp (\pi_1 (-h+S))$.  
Thus, by translation, it can be assumed without loss of generality that $|S_0|=0$.

Thus, we can consider $-h+S$ instead of $S$ provided the additional condition $\sigma ( \pi_1 (S)) = 0$ is not altered.  Since $\sigma ( \pi_1 (-h+S)) = |S|(-h)+ \sigma ( \pi_1 (S))$ and since  $|S|h= (3p+3)h=0$, this is indeed true.     

Now, we distinguish cases according to the cardinality of $|\supp(\pi_1(S))|$. By assumption it is strictly less than $|H_1|=9$. 

Suppose that $|\supp(\pi_1(S))|=8$.  We note that there exists exactly $8$ co-sets of cardinality $3$ that do not contain $0$. Each non-zero element is contained in exactly $3$ of them. Thus there exists as co-set $\{x,y,z\}$  such that $|S_xS_yS_z | \ge \frac{3}{8}|S| = \frac{9p+9}{8}$. The existence of the required subsequence now follows from part (1) of Lemma \ref{lemma_main} as $(9p+9)/8 > p+1$ for $p \ge 5$.

Suppose that $|\supp(\pi_1(S))|=7$. Let $-x \in H_1$ be the non-zero element such that $|S_{-x}| = 0$. We note that there are $4$ co-sets of cardinality $3$ that contain $x$, and $3$ of those contain neither $-x$ nor $0$. 
It thus follows that there exists a co-set  $\{x,y,z\}$ such that 
such that $|S_xS_yS_z | \ge |S_x| + \frac{1}{3}(|S_x^{-1}S|) = (p+1) +\frac{2|S_x|}{3}> p+1$. Using part (1) of Lemma \ref{lemma_main} again, the existence of the required subsequence follows. 

Suppose that $|\supp(\pi_1(S))|=6$. Let $g,h \in H_1$ be the two non-zero elements such that $|S_g| = |S_h| = 0$. If $g= -h$, then there is a co-set $\{x,y,z\}$ with respect to the subgroup $\{0,g,-g\}$ such that $|S_xS_yS_z | \ge \frac{1}{2}|S| = \frac{3p+3}{2}> p+1$ (note that the two co-sets other than $\{0,g,-g\}$ itself cover the $6$ remaining elements of $H_1$). Again, from part (1) of Lemma \ref{lemma_main} the argument is complete. 

If $g \neq -h$, then for one of the two co-sets $\{-g, -g +h, -g -h\}$ and 
$\{-h, -h +g,  -h - g\}$ one has $|S_xS_yS_z | \ge  |S_{-g-h}|+  \frac{1}{2} |S_{g+h}S| =  (3p+3 -|S_{g+h}| + |S_{-g-h}|)/2$; note that both co-sets contains $-g-h$ and union  of the two co-sets contains all element of $H_1 \setminus \{0,g,h\}$ except for $g+h$.  Since $|S_{g+h}|\le p$, it follows that $|S_xS_yS_z |>p+1$ and again from part (1) of Lemma \ref{lemma_main} the argument is complete.  

Suppose that $|\supp ( \pi_1(S))|\le 5$. In this case there exists some $h \in H_1$ such that  $|S_h| \ge \frac{1}{5}|S| = \frac{3p+3}{5}$. If $p> 5$, by applying part (3) of Lemma \ref{lemma_main} to $S_{h}$, then we can complete the proof; for $p \ge 11$ this is direct and for $p=7$ we observe that one has $|S_h| \ge 5$. 
It remains to consider the special case $p=5$. 
The part (3) of Lemma \ref{lemma_main} can be applied if there exists some $h \in H_1$ with $|S_h| = 5$. Thus assume that $|S_h|\le 4$ for all $h \in H_1$. This implies $|\supp (\pi_1(S))|= 5$ since otherwise there would exist some $h \in H_1$ with  $|S_h| \ge 18/4 > 4$. 

Let $\{h_1, \dots, h_5\} \subset H_1$ such that $|S_{h_i}|\neq 0$ for each $i \in [1,5]$. 
Since $\mathsf{g}(C_3^2)= 5$, as recalled in the Introduction, there exist distinct $i,j,k \in [1,5]$ such that $h_i + h_j +h_k = 0$. 
This is equivalent to $\{h_i , h_j, h_k\} \subset H_1$ being a co-set. Now, $|S_{h_i}S_{h_j}S_{h_k}| = |S| - 2  \max\{|S_{h}| \colon h \in H_1\} \ge 18 - 2 \cdot 4 = 10$. Again we can apply part (1) of  Lemma \ref{lemma_main} to complete the argument.  
\end{proof}

\begin{remark}
For $p$ sufficiently large  a shorter argument is available.  
There exists some $h\in H_1$ such that  $|S_h|\ge \frac{1}{8} |S| =  \frac{3p+3}{8}$.
We can apply part (3) of Lemma \ref{lemma_main} if  $3 |S_h|-8 \ge p$. 
This is true provided that 
 $3 \cdot \frac{3p+3}{8}-8 \ge p$, which is equivalent to  $\frac{p}{8} - \frac{55}{8} \geq 0$. Hence for $p \geq 55$ we can complete the argument in this way. 
\end{remark}

We now turn to the case $\sigma ( \pi_1 (S)) \neq 0$. 

\begin{proposition}
\label{prop_nonzero}
Let $S$ be a squarefree sequence over $G$ of length $3p+3$. If $\sigma ( \pi_1 (S)) \neq 0$, then $S$ has a zero-sum subsequence of length $3p$.
\end{proposition}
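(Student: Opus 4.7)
Let $t := \pi_1(\sigma(S)) \in H_1 \setminus \{0\}$ and $v_h := |S_h|$, so $\sum_h v_h = 3p + 3$. By Lemma~\ref{complement}, the task is to find a length-$3$ subsequence $R$ of $S$ with $\sigma(R) = \sigma(S)$, and I will produce $R$ through Lemma~\ref{lemma_main}. Part~(3) of that lemma is immediately unavailable here, since $3x = 0 \neq t$ for every $x \in H_1 \cong C_3^2$; only parts~(1) and~(2) are at our disposal. Since $\gcd(3p+3, p) = 1$ (valid because $p \neq 3$), translating $S$ by a suitable element of $H_2$ lets us assume $\pi_2(\sigma(S)) = 0$; applying an automorphism of $H_1$ I may also fix $t = e_1$.

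There are exactly $9$ unordered triples $\{x,y,z\} \subset H_1$ with $x+y+z = t$, and a short count shows that each $h \in H_1$ belongs to exactly $3$ of them. Consequently $\sum_{\{x,y,z\}} (v_x + v_y + v_z) = 3 \sum_h v_h = 9p + 9$, so the average triple sum is exactly $p+1$. This is one short of the threshold $p+2$ required by Lemma~\ref{lemma_main}(1); the analogous averages for Lemma~\ref{lemma_main}(2) also fall short of the threshold $p+4$. The proof must therefore exploit either an imbalance in the $v_h$'s to reach one of these thresholds, or the rigidity of the tight balanced case.

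For the imbalance regime: if $M := \max_h v_h$, attained at some $h^\ast$, satisfies $M \geq (p+3)/2$ and $v_{h^\ast + t} \geq 1$, then Lemma~\ref{lemma_main}(2) applies with $x = h^\ast$, $y = h^\ast + t$; in the delicate subcase $v_{h^\ast + t} = 0$, Lemma~\ref{lemma_main}(1) is invoked for a well-chosen triple $\{h^\ast, a, b\}$ through $h^\ast$ (the largeness of $v_{h^\ast}$ forces the total to exceed $p+2$ once $v_a, v_b \geq 1$ is secured). When $|\supp(\pi_1(S))| \leq 8$, reasoning as in the proof of Proposition~\ref{prop_zero}, a translation arranges $0 \notin \supp(\pi_1(S))$ and appropriate coset-based concentration yields a triple with $v_x + v_y + v_z \geq p + 2$.

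The main obstacle is the balanced case, when every triple sum equals exactly $p+1$. Solving the associated $9 \times 9$ linear system (with $t = e_1$) shows the solution space to be the two-parameter family $v_{(a, b)} = c_b$ with $c_0 + c_1 + c_2 = p + 1$; i.e., $v_h$ depends only on the second $H_1$-coordinate of $h$. Cauchy--Davenport then gives $|\pi_2(S_x) + \pi_2(S_y) + \pi_2(S_z)| \geq p - 1$ for each triple, so the sumset misses at most one element of $H_2$. Vosper's theorem (Theorem~\ref{vosper}) applied to pairs $\pi_2(S_x) + \pi_2(S_y)$ extracted from different triples, together with Lemma~\ref{lem_diff_ap}, forces all nine sets $\pi_2(S_h)$ to be arithmetic progressions in $H_2$ with a common difference $d$. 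A final explicit computation, using the normalization $\pi_2(\sigma(S)) = 0$, shows that the nine putative missing elements (each of the form $a_x + a_y + a_z - d$) cannot simultaneously equal $0$, so at least one sumset contains $0$ and yields the desired $R$. The hard step is this balanced case: the averaging inequalities are equalities, so one must combine the rigidity supplied by Vosper with careful bookkeeping across the nine triples to conclude.
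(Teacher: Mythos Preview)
Your averaging observation is sound, and the full-support case is actually easier than you realize. If every one of the nine triples $\{x,y,z\}$ with $x+y+z=t$ has $v_x+v_y+v_z=p+1$, your linear-system analysis correctly gives $v_h=c_{\bar h}$ where $\bar h$ is the class of $h$ in $H_1/\langle t\rangle$. But then
\[
\sigma(\pi_1(S))=\sum_{h\in H_1} v_h\, h=\sum_{\text{cosets }K\text{ of }\langle t\rangle}c_K\sum_{h\in K}h=0,
\]
since each coset of $\langle t\rangle$ has three elements summing to $0$. This contradicts $\sigma(\pi_1(S))=t\neq 0$, so the balanced case is vacuous. Your Vosper machinery is therefore unnecessary---and as written it is not obviously correct either, since nothing rules out some $c_b=1$, in which case the corresponding sets have size $1$ and Theorem~\ref{vosper} does not apply.

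The genuine gap is your handling of $|\supp(\pi_1(S))|\le 8$. You invoke ``reasoning as in the proof of Proposition~\ref{prop_zero}'', but that proof exploits that the relevant triples there are \emph{cosets} of cyclic subgroups of $H_1$ (they sum to $0$), so $H_1$ can be partitioned into three disjoint such triples and pigeonhole applies. The nine triples summing to $t\neq 0$ have a completely different structure: they are transversals of the cosets of $\langle t\rangle$, no two are disjoint, and the coset-partition argument does not transfer. Your fallback criterion $M\ge (p+3)/2$ also fails to cover this case: for small $p$ and one vanishing $v_h$ the maximum can sit well below $(p+3)/2$. The paper organizes this part differently: it translates so that $|S_c|$ is maximal where $c=\sigma(\pi_1(S))$, then splits on $v_C=|S_0|+|S_c|+|S_{-c}|$; for $v_C\le p+2$ it runs a careful subcase analysis on how many of the three pairs $\{g_i,-g_i\}\subset H_1\setminus\langle c\rangle$ have both $S_{\pm g_i}$ nonempty, applying parts (1) and (2) of Lemma~\ref{lemma_main} with explicit choices (and, in the last subcase, verifying by hand that a relation $2g_i+g_j=c$ always exists). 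That case analysis is where the substantive work lies, and your sketch does not supply it.
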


\begin{proof}
Let $c = \sigma(\pi_1(S))$. 
Without loss of generality, one can assume that $|S_c|$ is maximal among all $|S_h|$ for $h \in H_1$. The argument is the same as in the proof of Proposition \ref{prop_zero}.

Notice that $\frac {|S|}{9}=\frac{3p+3}{9}=\frac{p+1}{3}$ and thus $|S_c| \ge \frac{p+1}{3}$. 
Let us show that $|S_{c}| > \frac{p+1}{3}$.  If for each $h \in H_{1}$ one has $|S_h| = \frac {p+1}{3}$,
then $\sigma(\pi_1(S)) = \frac{p+1}{3} \sum_{h \in H_1}h$. Yet $\sum_{h \in H_1}h = 0$, which contradicts  $\sigma(\pi_1(S))= c \neq 0$.
Thus $|S_h| \neq \frac {p+1}{3}$  for some $h \in H_1$, and thus $|S_c| > \frac{p+1}{3}$. 

The strategy of the proof is again to apply Lemma \ref{lemma_main}. 
To this end we need to find subsequence of $\pi_1(S)$ of length $3$ that have sum $c$. 

One possibility is to consider such subsequence formed by elements from the the cyclic subgroup of $C= \{-c, 0, c\}$ only. Thus the  subsequences of this subgroup of length $3$ which have sum $c$ are: $0^{2}c$ and $ (-c)^{2}0$ and $c^{2}(-c)$. This approach works if sufficiently many elements from the sequence $S$ are contained in this subgroup. 
This is detailed in case 1 below.

Another possibility is to consider subsequences of the form $c h (-h)$ with $h \notin C$. 
While not phrased explicitly in this form, the distinction of subcases in case 2 corresponds to the number (counted without multiplicity) 
of distinct subsequences  of this form in  $\pi_1(S)$. 

Let $v_{C}=|S_{0}S_{c}S_{-c}|$.

\noindent
\textbf{Case 1:} $v_{C} \ge p+3$. If $|S_{-c}| = 0$, then  $|S_{0}|+|S_{c}|=v_{C} \ge p+3$. Thus, $|S_{0}| \ge 3$ and $|S_{c}| \ge 1$. 
Thus $|S_c| +2  |S_0|=|S_{c}|+|S_{0}|+|S_{0}| \geq p+3+2 = p + 5$. From part (2) of Lemma \ref{lemma_main} applied with $x=0$ and $y=c$, the claim follows.

If $|S_0| \leq  1$, then $|S_{-c}| \ge 2$ and thus $ 2 \cdot |S_c| +  |S_{-c}| \geq  v_C - 1 + |S_c|  \geq p    + 4$.
The claim follows from part (2) of Lemma \ref{lemma_main}, applied with $x=c$ and $y=-c$.

If $|S_{-c}| \ge 1$ and $|S_0| \geq  2$, and one of  $|S_c| +2  |S_0| \geq p  + 4$ or $ 2  |S_c| + |S_{-c}| \geq  p + 4$ are true, then the claim follows from part (2) of Lemma \ref{lemma_main}; notice that $c + 2 \cdot 0 = c $ and $ 2 c + (-c)=c $. 
Thus, assume $|S_c| +2  |S_0| \le p  + 3$ and $ 2  |S_c| +  |S_{-c}| \leq  p   + 3$.

Summing the two inequalities, it follows that  $3  |S_c| + 2 |S_0| + |S_{-c}| \leq 2p+6$. Since $v_C =  |S_{c}| + |S_{0}| + |S_{-c}| \ge p+3$, it follows that  $v_C = p + 3$ and $|S_{c}| = |S_{-c}|$. Since $|S_{c}| \ge \frac {v_C}{3} = \frac {p+3}{3}$, which is not an integer, it follows that in fact $|S_{c}| \ge \frac {p+4}{3}$.
Yet, then $ 2  |S_{c}| +  |S_{-c}| =3   |S_{c}|  \geq p    + 4$, and the claim follows again.

\smallskip
\noindent
\textbf{Case 2:} $v_C \le p+2$. The set $H_1 \setminus C$, can be partitioned into three subsets of size two, each containing an element and its inverse, say $H_1 \setminus C =\{ g_1, -g_1, g_2, -g_2, g_3  , -g_3\}$. Possibly exchanging the role of $g_i$ and $-g_i$, one can assume that for each $i \in [1,3]$ one has $|S_{g_i}| \ge |S_{-g_i}|$. 
In addition, by renumbering if necessary, one can assume that $|S_{-g_1}| \ge |S_{-g_2}| \ge |S_{-g_3}|$. Adopting this convention we get that $|S_{-g_3}|>0$ implies that in fact all six sequence $S_h$ for $h \in H_1 \setminus C$ are non-empty. However, note that we do not know if, say, $|S_{g_1}|\ge |S_{g_2}|$; we only know $|S_{g_1}|\ge |S_{-g_1}| \ge |S_{-g_2}|$ and $|S_{g_2}|  \ge |S_{-g_2}|$.    

\noindent
\textbf{Case 2.1:}  $|S_{-g_3}| > 0$. Let $i \in \{1,2,3\}$ such that  $|S_{g_i}S_{-g_i}|$ is maximal among $|S_{g_1}S_{-g_1}|$, $|S_{g_2}S_{-g_2}|$, and $|S_{g_3}S_{-g_3}|$. Thus  $|S_{g_i}S_{-g_i}| \geq \frac{3p+3 - v_C}{3}$.

Hence 
\[
|S_{c}| + |S_{g_i}| + |S_{- g_i}| \ge  \frac{3p+3 - v_C}{3}  +  |S_{c}|  = (p+1) +  \left(|S_{c}| -  \frac{v_C}{3}\right).\]
Thus, $|S_{c}| + |S_{g_i}| + |S_{-g_i}| \geq p+1$ with equality if and only if $|S_{c}| =   \frac{v_C}{3}$ and $|S_{g_i}| + |S_{-g_i}|=  \frac{3p+3- v_C}{3}$.  
If equality does not hold, then the claim follows from part (1) of  Lemma \ref{lemma_main} as one has $|S_{c}| + |S_{g_1}| + |S_{-g_1}|  > p +1$.

Thus assume that one has equality, that is, assume $|S_{c}| =   \frac{v_C}{3}$ and $|S_{g_i}| + |S_{-g_i}|=  \frac{3p+3- v_C}{3}$.

The former implies that  $|S_{c}|=|S_{-c}|=|S_{0}|$ and since $|S_{c}| \ge \frac{p+2}{3}$ (recall the argument at the very beginning of the proof) while  $v_C \le p+2$ (this is the assumption of Case 2) we get that in fact $v_C = p+2$, and thus $ \frac {|S_{c}|}{3} = \frac{p+2}{3}$. Furthermore, we can now infer that $|S_{g_i}| + |S_{-g_i}|= \frac{2p+1}{3}$.  Yet since   $|S_{g_i}|\le |S_c|$, this is only possible if $|S_{g_i}|=\frac{p+2}{3}$ and $|S_{-g_i}|= \frac{p-1}{3}$.  The same holds true for each of  $g_1,g_2, g_3$. Therefore,  one has 
\[
c=\sigma(\pi_1 (S))=\frac {p+2}{3}  \bigl(c+(-c)+0 + g_1+g_2+g_3 \bigr)+ \frac{p-1}{3}  (-g_1-g_2-g_3)=g_1+g_2+g_3.\]
Now, we can apply part (1) of  Lemma \ref{lemma_main} with $g_1, g_2, g_3$; note that  $|S_{g_1}|+ |S_{g_2}|+|S_{g_3}| = 3 \cdot \frac{p+2}{3} =p+2$. (In fact it can be seen that  $g_1 + g_2 + g_3 = c$ is impossible. To assert this would be another way to conclude the argument.)

\noindent
\textbf{Case 2.2:}  $|S_{-g_2}| > 0$ and $|S_{-g_3}| = 0$.
One has $|S_{g_1}S_{-g_1}S_{g_2}S_{-g_2}|   = 3p+3  - v_C - |S_{g_3}|  \ge  3p+3  - v_C - |S_{c}|$.
Let $i \in \{1,2\}$ such that $|S_{g_i}S_{-g_i}|$ is maximal among  $|S_{g_1}S_{-g_1}|$ and $|S_{g_2}S_{-g_2}|$.
Then,
\[
|S_{c}| + |S_{g_i}| + |S_{- g_i}| \ge  \frac{3p+3 - v_C - |S_{c}|}{2}  + |S_{c}| =
p+1 + \frac{p+1 - v_C +  |S_{c}|}{2}.
\] 
Now, since $v_C \le p+2$  and $|S_{c}| \ge \frac{p+2}{3} \ge 2$, it follows that $|S_{c}| + |S_{g_i}| + |S_{- g_i}| \ge p+2$ and one can apply part (1) of Lemma \ref{lemma_main} with $c,g_i, -g_i$.

\noindent
\textbf{Case 2.3:}  $|S_{-g_1}| > 0$ and $|S_{-g_2}| = |S_{-g_3}| = 0$.
One has $|S_{g_1}S_{-g_1}|   = 3p+3  - v_C - |S_{g_2}S_{g_3}|  \ge  3p+3  - v_C -2 \cdot  |S_{c}|$.
Thus, $|S_{c}| + |S_{g_1}| + |S_{- g_1}| \ge  3p+3 - v_C  - |S_{c}|$. Since $v_C \le p +2$  and $|S_{c}|\le p $, it follow that   $|S_{c}| + |S_{g_1}| + |S_{- g_1}| \ge p$ 
with equality if and only if  $v_C = p +2$  and $|S_{c}|= p$.
If equality does not hold, the claim follows from  part (1) of Lemma \ref{lemma_main} with $c,g_1,-g_1$. 
Thus, we assume $v_C = p+2$ and $|S_{c}|= p$.
If $|S_{-c}| \neq 0$, then the claim follows from part (2) of Lemma \ref{lemma_main} with $x=c$ and $y = -c$, as $2  |S_{c}| + |S_{-c}| \ge p+4$.  
If $|S_{-c}| = 0$, then $|S_{0}| = v_C - |S_c| = 2$ and the claim follows from part (2) of Lemma \ref{lemma_main} with $x=0$ and $y=c$ as $ 2  |S_{0}| + |S_c|= p+4$ and we are done again.

\noindent
\textbf{Case 2.4:}  $|S_{-g_1}|=|S_{-g_2}| = |S_{-g_3}| = 0$. One has $|S_{g_1}S_{g_2}S_{g_3}|  = 3p + 3 -v_C  \ge 2p+1$
If $|S_{c}|= p$, then we can assume $v_C \le p+1$ (see the argument at the end of the preceding case). Thus, in this case
  $|S_{g_1}S_{g_2}S_{g_3}|  \ge 2p+2$.
It follows that for each $i \in \{1, 2, 3 \}$, one has $|S_{g_i}| \ge 2$, and thus
$2|S_{g_i}|+ |S_{g_j}| \ge 2 + (2p+2 -p)= p+4$, for each choice of distinct $i,j \in \{1,2,3\}$.

If $|S_{c}|\le  p-1$, then it follows that for each $i \in \{1, 2, 3 \}$, one has$|S_{g_i}| \ge 2p+1 - 2(p-1)=3$, and thus
$2|S_{g_i}|+ |S_{g_j}| \ge 3 + (2p+1 -(p-1))= p+5$, for each choice of distinct $i,j \in \{1,2,3\}$.

Thus,  if there is a choice of $i,j$ such that $2 g_i +g_j = c$, applying part (2) of Lemma \ref{lemma_main}, yields the claimed result as $2|S_{g_i}|+ |S_{g_j}| \ge p+4$.

By inspection we can see that indeed there always is such a choice. 
To wit, for $d$ an element in $H_1$ such that $H_1 = \langle c \rangle \oplus \langle d \rangle$ we note that 
\[\{ \{g_1, -g_1\},\{g_2, -g_2\},\{g_3, -g_3\} \} =  \{\{ d, -d  \},\{c+d, -c-d \},\{c-d, -c+d \}\}.\] 
There are eight possibilities for the set $\{g_1,g_2, g_3\}$ (note that the order of the elements is not relevant), and for each of these eight choices we find a relation of the form   $2 \cdot g_i + 1 \cdot g_j + 0 \cdot g_k = c$ with $\{i,j,k\}= \{1,2,3\}$. Specifically:
\begin{itemize}
\item $2 \cdot d+1 \cdot (c+d)+ 0 \cdot (c-d)=c$
\item $1 \cdot d +0 \cdot (c+d)+ 2 \cdot (-c+d)=c$
\item $0 \cdot d + 1 \cdot (-c-d)+ 2 \cdot (c-d)=c$
\item $1 \cdot d + 0 \cdot (-c-d)+ 2 \cdot (-c+d)=c$
\item $2 \cdot (-d) + 0 \cdot (c+d)+ 1 \cdot (c-d)=c$
\item $0 \cdot (-d) +2 \cdot (c+d)+ 1 \cdot (-c+d)=c$
\item $1 \cdot (-d) +2 \cdot (-c-d) + 0 \cdot (c-d)=c$
\item $1 \cdot (-d) +2 \cdot (-c-d) + 0 \cdot (-c+d)=c$
\end{itemize}
The claim is established. 
\end{proof}

\subsection{Proof of Theorem \ref{thm_main}}

To establish our main result we combine the partial results obtained thus far. 

By Lemmas \ref{lem_lbsimple} and \ref{lem_lbimproved} we know that $\g(C_3 \oplus C_{3n}) \ge 3n+3$ for each $n \ge 2 $.

Now, assume that $p \ge 5$ is prime. We want to show that  $\g(C_3 \oplus C_{3p}) \le 3p+3$. Let $S$ be a squarefree sequence of length $3p+3$ over  $C_3 \oplus C_{3p}$. 
We need to show that $S$ has a zero-sum subsequence of length $3p$. We continue to use the maps $\pi_1$ and $\pi_2$ introduce in the preceding subsection. 

If $\sigma(\pi_1(S)) \neq 0$, then $S$ has a zero-sum subsequence by Proposition \ref{prop_nonzero}.
If  $\sigma(\pi_1(S)) = 0$, then $S$ has a zero-sum subsequence either by Proposition \ref{prop_zero_full} or by Proposition \ref{prop_zero}. 

Thus, in any case, $S$ has a zero-sum subsequence of length $3p$ and therefore $\g(C_3 \oplus C_{3p}) \le 3p+3$. 
In combination with the lower bound this implies that indeed $\g(C_3 \oplus C_{3p}) =  3p+3$ for each prime  $p \ge 5 $.

It remains to determine the value of $\g(C_3 \oplus C_6)$ and of $\g(C_3  \oplus C_9)$. We know by Lemmas \ref{lem_lbsimple} and \ref{lb_C3C9} that the respective values are lower bounds. To show that these values are the exact values of the Harborth constant we used an algorithm for determining the Harborth constant that we discuss in the last section. 

\section{An algorithm for determining the Harborth constant and some computational results}

For the description of the algorithm we use the language of sets rather than that of sequences, as the description feels slightly more natural. 
To determine the Harborth constant of $G$ means to find the smallest $k$ such that each subset of $G$ of cardinality $k$ has a subset of cardinality $\exp(G)$ with sum $0$.
We outline the algorithm we used below. 

In the first step, all subsets of $G$ of cardinality $\exp(G)$  with sum $0$ are constructed (see Remark \ref{rem_algo} for some details on this). If the subsets of cardinality $\exp(G)$ with sum $0$ happen to be all the subsets of $G$ of cardinality $\exp(G)$, then this means that the Harborth constant is $\exp(G)$. If not, then we consider all subsets of $G$ that are direct successors of a set of cardinality equal to  $\exp(G)$ with sum $0$; in other words, we extend each subset of cardinality $\exp(G)$ with sum $0$ in all possible ways to a subset of cardinality $\exp(G)+1$. Thus, we obtain all subsets of $G$ of cardinality $\exp(G)+1$ that contain a subset of cardinality $\exp(G)$ with sum $0$. If the subsets of $G$ obtained in this way are all subsets of $G$ of cardinality $\exp(G)+1$, then we have established that the Harborth constant of $G$ is $\exp(G)+1$. If not, then we continue as above until for some $k$ the set of subsets of cardinality $k$ obtained in this way coincides with the set of all subsets of cardinality $k$ of $G$. 
   
Below we detail the steps of the algorithm a bit more. However, a more complete investigation of the algorithmic problem will be presented elsewhere, and we gloss over more technical aspects here.    

\subsection{The steps of the algorithm}  
\ \\ [1ex]

\noindent
\textbf{Input:} A finite abelian group $G$ of order $n$ and exponent $e$. \\  
\textbf{Output:}  $\g(G)$, the Harborth constant of the group $G$.

\begin{itemize} 
 \item \ [Initialization] Let $\Z(e)$ denote the collection of all subsets of $G$ of cardinality $e$ that have sum $0$. Set $k = e$. 
 \item \ [Check] If $|\Z(k)| =  \binom{n}{k}$, then return $\g(G)=k$ and end. Else, increment $k$ to $k+1$.   
 \item \ [Extend] Let $\Z(k)$ denote the collection of all subsets of cardinality $k$ of $G$ that have subset  that is in $\Z(k-1)$. Go to [Check].
\end{itemize}

We add some further explanations and remarks. 

\begin{remarks}
\label{rem_algo}
  \ 
\begin{enumerate}
\item The group intervenes only in the step [Initialization]. (The rest of the algorithm operates merely with subsets of a given ambient set.) To find all subsets of cardinality $e$ with sum $0$, we browse all subsets of cardinality $ e-1 $. For each of these sets, we check if the inverse of the $e-1$ elements belongs to the set; if it does not, we add it to the set to obtain a set of cardinality $e$ and sum $0$.
For this step the subsets of $G$ are represented by a bitmap.
\item  For the latter parts of the algorithm in addition to the representations as bitmaps a judiciously chosen numbering of the subsets of $G$ is used. 
In particular, the numbering is chosen in such a way that for every subset its cardinal is at least as large as the cardinal of all its predecessors.
This is useful as in this way at each step, our search can be efficiently limited to the $ \binom{n}{k}$ subsets of a given cardinal $k$ instead of having to consider all $2^n$ subsets at each step. 
\item The subsets of cardinality $k$ that are not in $\Z(k)$  are all the subsets of $G$ of cardinality $k$ that have no zero-sum subset of $e$ elements. 
Thus, in the final step before the algorithm terminates we effectively have all the subsets 
of $G$ of cardinality $\g(G)-1$ that have no zero-sum subset of $e$ elements. That is, the algorithm can be immediately modified to solve the inverse problem associated to $\g(G)$ as well.  
\item The algorithm is valid for any finite abelian group. With the hardware at out disposal it is possible to compute the Harborth constant for finite abelian groups of order up to about $45$. The main limiting factor is memory.  In order to increase the size of accessible groups, we are currently working on a more efficient subset-representation based on data compression.  
\item The fact that $e$ is equal to the exponent of the group is not relevant for the algorithm. It can be directly modified to compute related constants. 
 \end{enumerate}
 \end{remarks}

We end by mentioning two further computational results. 

\begin{proposition} 
\label{prop_comp}
\
\begin{enumerate}
\item $\g(C_6 \oplus C_6) = 13$. 
\item $\g(C_3 \oplus C_{12}) = 15$. 
\end{enumerate}
\end{proposition}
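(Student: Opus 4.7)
The plan is to verify both equalities by applying the algorithm described earlier in this section, coupled with the lower bounds already at our disposal.

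For part (2), the lower bound $\g(C_3 \oplus C_{12}) \ge 15$ is furnished by Lemma \ref{lem_lbimproved} applied with $n=4$. To establish the reverse inequality, we would run the algorithm on $G = C_3 \oplus C_{12}$, which has $|G| = 36$ and $\exp(G) = 12$. Since $36$ lies comfortably within the computational window (about $45$) described in Remarks \ref{rem_algo}, this is tractable. If the algorithm terminates at $k = 15$, that is, $|\Z(15)| = \binom{36}{15}$ while $|\Z(14)| < \binom{36}{14}$, then $\g(G) \le 15$, and combined with the lower bound the exact value is $15$.

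For part (1), the group $C_6 \oplus C_6$ is not of the form covered by the tools developed in Section 3, and the general lower bound from Lemma \ref{lbound_general} only yields $\g(C_6 \oplus C_6) \ge \g(C_6) + \mathsf{D}(C_6) - 1 = 12$, so we cannot avoid producing an explicit witness certifying the improved lower bound $13$. One route is to exhibit by hand a squarefree set of cardinality $12$ without a zero-sum subset of size $6$; a natural candidate is a judiciously chosen union of two cosets of a cyclic subgroup of order $6$, in the spirit of the Gao--Thangadurai construction that led to the conjectured value $2n+1$ for $n$ even. Alternatively, as indicated in Remark \ref{rem_algo} (3), the algorithm itself, upon terminating at $k = 13$, returns the complete list of subsets of cardinality $12$ that have no zero-sum subset of size $\exp(G)=6$, and any element of this list certifies $\g(G) \ge 13$. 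Thus a single execution of the algorithm yields both bounds simultaneously.

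The hard part is the sheer number of subsets to inspect: for $|G| = 36$ and $k$ close to the expected value of $\g(G)$, the binomial coefficient $\binom{36}{k}$ is of order $10^9$. The bitmap representation and the cardinality-respecting numbering from Remarks \ref{rem_algo} (1)--(2) are what make the enumeration feasible, since a naive brute-force approach would be out of reach. If memory turns out to be the binding constraint near $k \approx 13$--$15$, one would fall back on the data-compression scheme alluded to in Remark \ref{rem_algo} (4).
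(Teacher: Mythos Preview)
Your proposal is correct and matches the paper's approach: Proposition \ref{prop_comp} is stated in the paper as a computational result obtained by running the algorithm of Section 4, with no further proof given. Your discussion of separate lower bounds is harmless but slightly redundant, since the algorithm itself certifies both inequalities (non-termination at $k-1$ gives $\g(G) > k-1$, termination at $k$ gives $\g(G) \le k$), as you in fact note for part (1).
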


The former confirms the conjecture by Gao and Thangadurai, mentioned in the introduction, $\g(C_n \oplus C_n) = 2n+1$ for even $n$ in case $n=6$.
The latter shows that  $\g(C_3 \oplus C_{3n}) = 3n+3$ also holds for $n=4$, which supports the idea that $\g(C_3 \oplus C_{3n}) = 3n+3$ might hold for $n$ that are not prime as well.

\end{document}